\newcommand{\RR}{\mathbb R}
\newcommand{\ZZ}{\mathbb Z}
\newcommand{\NN}{\mathbb N}
\newcommand{\FF}{\mathbb F}
\renewcommand{\P}{\mathcal{P}}
\newcommand{\C}{\mathtt{C}}
\newcommand{\D}{\mathtt{D}}
\newcommand{\hdim}{\dim_\mathtt{h}}
\newcommand{\vcdim}{\dim_\textup{VC}}
\newcommand{\rank}{\operatorname{rank}}
\newcommand{\pdim}{\operatorname{projdim}}
\DeclareMathOperator{\reg}{reg}
\theoremstyle{definition}
\newtheorem{thm}{Theorem}[section]
\newtheorem{cor}[thm]{Corollary}
\newtheorem{lem}[thm]{Lemma}
\newtheorem{prop}[thm]{Proposition}
\newtheorem{defn}[thm]{Definition}
\newtheorem{eg}[thm]{Example}
\newtheorem{rem}[thm]{Remark}
\newcommand{\VertSet}{\mathrm{Vert}}
\newcommand{\sbe}{\subseteq}
\title{Free resolutions of function classes via order complexes}
\author{Justin Chen, Christopher Eur, Greg Yang, Mengyuan Zhang}
\address{Georgia Institute of Technology. Atlanta, GA. USA}
\email{justin.chen@math.gatech.edu}
\address{University of California, Berkeley. Berkeley, CA. USA.}
\email{ceur@math.berkeley.edu}
\address{Microsoft Research AI. Redmond, WA. USA.}
\email{gregyang@microsoft.com}
\address{University of California, Berkeley. Berkeley, CA. USA.}
\email{myzhang@berkeley.edu}
\begin{document}

\maketitle

\begin{abstract}
Function classes are collections of Boolean functions on a finite set, which are fundamental objects of study in theoretical computer science.  We study algebraic properties of ideals associated to function classes previously defined by the third author.  We consider the broad family of intersection-closed function classes, and describe cellular free resolutions of their ideals by order complexes of the associated posets.  For function classes arising from matroids, polyhedral cell complexes, and more generally interval Cohen-Macaulay posets, we show that the multigraded Betti numbers are pure, and are given combinatorially by the M\"obius functions.  We then apply our methods to derive bounds on the VC dimension of some important families of function classes in learning theory.
\end{abstract}

\section{Introduction}

For $n \in \NN$, let $[n] := \{0, 1, \ldots, n-1\}$.  A \emph{function class}\footnote{In learning theory, such Boolean function classes are also called \emph{concept classes}.} $\C$ is a collection of Boolean functions on $[n]$, that is, $\C \subseteq [2]^{[n]}$. 

A central question of learning theory is:
\begin{center}
    \it
    how much data is required to learn an unknown function $f^*$,

    given that $f^*$ is in some known function class $\C$?
\end{center}
Here, \emph{to learn $f^*$} means to identify some function $\hat f\in \C$ such that $\hat f$ is identical to $f^*$ except on a small subset\footnote{More precisely, by a small subset we mean that the subset has small probability under the distribution that the data is drawn from.} of $[n]$. 

The classical answer to the question above is given by the VC dimension.

\begin{defn} \label{defn:VCdim}
We say a subset $U \subseteq [n]$ is \textit{shattered by} $\C$ if every function on $U$ is a restriction of some function in $\C$.
The \textit{VC dimension} (Vapnik-Chervonenkis dimension) \cite{vc} of $\C$ is 
$$\vcdim \C := \max \{ |U| \, \big| \, U \text{ is shattered by } \C \}.$$
\end{defn}

For more than 40 years since its introduction, the VC dimension has occupied center-stage in learning theory and other \emph{analytically-flavored} branches of computer science.
It is a celebrated theorem in classical learning theory that the number of samples needed to learn an unknown function in $\C$ is proportional to $\vcdim \C$; we point to \cite{kearns_introduction_1994} for precise statements and more details on learning theory.

\medskip
In this paper, we continue the study of the learning theoretic properties of $\C$ using invariants of \emph{homological nature} introduced by the third author \cite{Yan17}.
There is a natural simplicial complex $\Diamond_\C$ associated to a function class $\C$, called the \emph{suboplex} of $\C$. We consider the Stanley-Reisner ideal $I_\C$ of $\Diamond_\C$ as well as its dual ideal $I^\star_\C$ --- see \S\ref{section:suboplexes} for details.  One can then analyze the learning theoretic properties of $\C$ by drawing upon the vast literature on squarefree monomial ideals.  

\begin{thm}\cite[Theorem 3.11]{Yan17}
Define the \textit{homological dimension} $\hdim \C$ of a function class $\C$ as the projective dimension of $I^\star_\C$, i.e. $\hdim \C := \pdim I^\star_\C$.  Then
$$\vcdim \C \le \hdim \C.$$
\end{thm}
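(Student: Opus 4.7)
The plan is to convert the combinatorial shattering property of $\C$ into a nonvanishing multigraded Betti number of $I^\star_\C$ at a high homological step, using Hochster's formula.

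First, I would unpack (from \S\ref{section:suboplexes}) the basic combinatorics of the suboplex: $\Diamond_\C$ is a simplicial complex on the vertex set $[n]\times[2]$ (one vertex per literal) sitting inside the boundary of the $n$-dimensional cross-polytope, whose facets correspond bijectively to the functions in $\C$. In this language, a subset $U\sbe[n]$ being shattered amounts to the statement that every maximal simplex of the boundary of the $|U|$-dimensional cross-polytope on $U\times[2]$ is contained in some facet of $\Diamond_\C$, hence is itself a face of $\Diamond_\C$.

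Now fix a shattered set $U$ with $|U|=d=\vcdim\C$, and let $\sigma = U\times[2]$. Since $\Diamond_\C$ is already contained in the boundary of the full $n$-cross-polytope, no face of $\Diamond_\C$ has cardinality greater than $d$ when restricted to $\sigma$. Combined with the previous paragraph, this shows that the restriction $\Diamond_\C|_\sigma$ is \emph{exactly} the boundary of the $d$-dimensional cross-polytope, which is a $(d-1)$-sphere and so has $\widetilde H_{d-1}(\Diamond_\C|_\sigma;k)\ne 0$. The remaining step is to feed this nonvanishing homology into Hochster's formula, applied in its Alexander-dual form so that the relevant homology computing the Betti numbers of $I^\star_\C$ is precisely that of restrictions of $\Diamond_\C$. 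In its cleanest form this formula converts a nonzero class in $\widetilde H_{d-1}(\Diamond_\C|_\sigma;k)$ into a nonzero multigraded Betti number of $I^\star_\C$ in homological step $d$, giving $\pdim I^\star_\C \ge d = \vcdim\C$ as desired.

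The main obstacle is the bookkeeping around the passage to the dual ideal $I^\star_\C$: one must set up the Alexander-dual Hochster formula carefully so that the homological shift lines up, guaranteeing that a $(d-1)$-cycle on $2d$ vertices in $\Diamond_\C$ produces a nonzero Betti number in step exactly $d$ (rather than $d-1$ or $d+1$), and one must check that this cycle is not killed by any larger face of $\Diamond_\C|_\sigma$ --- which is automatic once we observe that $\Diamond_\C|_\sigma$ has no $d$-dimensional face. Once these duality conventions are pinned down, the argument reduces to the clean observation that shattering manifests geometrically as a cross-polytope sphere inside $\Diamond_\C$.
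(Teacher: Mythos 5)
Your proof is correct in substance, and since the paper only cites \cite[Theorem~3.11]{Yan17} without reproducing the argument, there is no in-text proof to compare against; I will comment on the route you chose and on the one small point of bookkeeping that needs to be pinned down.

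The geometric core of your argument is right: for a shattered $U\sbe[n]$ with $|U|=d$ and $\sigma = U\times[2]$, every face of $\Diamond_\C$ is the graph of a partial function, so its intersection with $\sigma$ contains at most one of each pair $(i,0),(i,1)$ for $i\in U$; combined with shattering (every function $U\to[2]$ extends to some $f\in\C$), this forces $\Diamond_\C|_\sigma$ to be exactly the boundary complex of the $d$-dimensional cross-polytope, hence a simplicial $(d-1)$-sphere with $\widetilde H_{d-1}(\Diamond_\C|_\sigma;\mathbbm k)\ne 0$. This is a clean and elegant reformulation of the shattering condition.

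The one place you should be more careful is the exact duality bookkeeping, which you flag yourself. The ``Alexander-dual form'' of Hochster's formula does not directly express Betti numbers of $I^\star_\C$ via \emph{restrictions} of $\Diamond_\C$ (it expresses them via \emph{links} of $\Diamond_\C$, which does not apply here because $\sigma^c$ is not a face of $\Diamond_\C$). The cleanest routing is: apply the ordinary Hochster formula to $S/I_\C$ to get
$\beta_{d,\sigma}(S/I_\C)=\dim_{\mathbbm k}\widetilde H^{|\sigma|-d-1}(\Diamond_\C|_\sigma;\mathbbm k)=\dim_{\mathbbm k}\widetilde H_{d-1}(\Diamond_\C|_\sigma;\mathbbm k)\ne 0$
(using $|\sigma|=2d$), so that $\reg(S/I_\C)\ge |\sigma|-d = d$; then invoke Terai's theorem in the form of the paper's Remark~\ref{hdimRegularity}, $\hdim\C=\pdim I^\star_\C=\reg(S/I_\C)$, to conclude $\hdim\C\ge d=\vcdim\C$. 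So the nonvanishing homology produces a Betti number of $S/I_\C$, not directly of $I^\star_\C$; the transfer to $\pdim I^\star_\C$ goes through the regularity/projective-dimension duality, not through a Betti-number-by-Betti-number identification. With that adjustment, your proof is complete. Note also that the paper's own framework surrounding the statement (the collapse map $\pi$, $\vcdim\C=\reg(T/\pi(I_\C))$, $\hdim\C=\reg(S/I_\C)$) suggests an alternative proof via a regularity comparison under a linear quotient; your argument is instead purely topological and arguably more transparent.
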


The two quantities $\vcdim$ and $\hdim$ can be different, but they do coincide for many function classes of importance in computer science \cite[Section 3.1]{Yan17}, such as the class of parity functions, the class of polynomial threshold functions, or the class of monotone conjunctions.  Our goal in this paper is two-fold: (i) to investigate the multigraded Betti numbers of $I^\star_\C$, and (ii) to identify new large families of function classes for which $\vcdim$ and $\hdim$ coincide or approximately coincide.

Our main cases of interest are function classes with suitable semi-lattice structures. 
Let $\P$ be a subposet of the lattice of subsets of $[n]$ that is intersection-closed (see \S\ref{section:semilattice}). We consider the function class $\C(\P)$ defined by $\P$ by identifying subsets of $[n]$ with their indicator functions as in \cite{HSW89}. Function classes arising in this way include:
\begin{itemize}
    \item conjunctions (logical AND) of parity functions (i.e.\ conjunctions of linear functionals over $\FF_2$), and more generally the lattice of flats of a matroid (see \S\ref{subsection:matroids} and \S\ref{subsection:comp2}),
    \item downward-closed classes, and more generally the face poset of a polyhedral cell complex (see \S\ref{subsection:facePosetCellComplex}), and
    \item the class of $k$-CNFs (conjunctive normal forms) and the class of $CSP$s (constraint satisfaction problems) (see \S\ref{subsection:comp1}).
\end{itemize}

Our main result is a construction of an explicit free resolution of the ideal $I^\star_{\C(\P)}$ via the order complex $\Delta_{\P}$ of $\P$.  We refer to \S\ref{section:posets} and \S\ref{section:semilattice} for the relevant definitions and notation.

\newtheorem*{thm:resolution}{\Cref{thm:resolution}}
\begin{thm:resolution}
Let $\P \subseteq 2^{[n]}$ be an intersection-closed poset, and $S$ the polynomial ring of the ideal $I^\star_{\C(\P)}$.
Denote by $\mathcal F(\Delta_\P)$ the cellular chain complex of free $S$-modules arising from the order complex $\Delta_\P$ of $\P$, with vertices labelled by minimal generators of $I^\star_{\C(\P)}$.
Then $\mathcal F(\Delta_\P)$ is acyclic and hence gives an $S$-free resolution of $S/I_{\C(\P)}^\star$.
\end{thm:resolution}

As a consequence, we obtain the multigraded Betti numbers of $I^\star_{\C(\P)}$ purely in terms of the poset topology of $\P$.

\theoremstyle{definition}
\newtheorem*{thm:Betti1}{\Cref{thm:Betti1}}
\begin{thm:Betti1}
Let $\P \subseteq 2^{[n]}$ be an intersection-closed poset.  Then the nonzero multigraded Betti numbers of $I^\star_{\C(\P)}$ occur precisely in the degrees of certain monomials denoted $m(A,B)$ associated to each $A\leq B \in \P$ (see \Cref{defn:dictionary}), and in these degrees,
\[
\beta_{i,m(A,B)}\left(I_{\C(\P)}^\star\right) = \dim_{\mathbbm k} \widetilde{H}_{i-2}(\overline{\Delta}_{[A,B]};\mathbbm k), \quad \forall i\ge 1
\]
where $\overline{\Delta}_{[A,B]}$ denotes the truncated order complex of the interval $[A,B]$ (Definition \ref{defn:trunc}).
\end{thm:Betti1}

\theoremstyle{definition}
\newtheorem*{cor:pdim<rank}{\Cref{cor:pdim<rank}}
\begin{cor:pdim<rank}
Let $\P \subseteq 2^{[n]}$ be an intersection-closed poset. Then $\hdim \C(\P) \leq \rank(\P)$.
\end{cor:pdim<rank}

If furthermore all (open) intervals of $\P$ are Cohen-Macaulay (in which case we say $\P$ is \emph{interval Cohen-Macaulay}),
as is the case when $\P$ arises from a matroid or a polyhedral cell complex, then the multigraded Betti numbers of $I^\star_{\C(\P)}$ are pure and can be described combinatorially by the M\"obius function of $\P$ (see \S\ref{section:locCM}).

\newtheorem*{thm:Betti2}{\Cref{thm:Betti2}}
\begin{thm:Betti2}
Let $\P \subseteq 2^{[n]}$ be an intersection-closed poset that is interval Cohen-Macaulay, and $\mu_{\mathcal P}(\cdot, \cdot)$ the M\"obius function of the poset $\mathcal  P$.  Then (with $m(A, B)$ as in Theorem \ref{thm:Betti1}),
$$\beta_{i,m(A,B)}\left(I^\star_{\C(\P)}\right) = 
\begin{cases} |\mu_\P(A,B)| & \textnormal{if }\rank([A,B]) = i\\
0 & \textnormal{otherwise.}
\end{cases}
$$
\end{thm:Betti2}

For example, when $\P = \P_M$ is the lattice of flats of a matroid $M$, the quantity $\mu_\P(F,G)$ for $F \subseteq G \in \P$ is known as the \emph{M\"obius number} of the matroid minor $M|G/F$.  When $\P = \P_X$ is the face poset of a polyhedral cell complex $X$, then the quantity $\mu_\P(F,G)$ for $F \subseteq G \in \P$ is the reduced Euler characteristic of the boundary complex of a polytope, which is always $\pm 1$.  

\medskip
In section \S\ref{section:comp}, we apply our tools to give bounds for the VC dimension of various function classes of importance in learning theory, such as the class of $k$-CNFs, the class of $\D$-CSPs, the class of conjunctions of parity functions, and more generally the class of conjunctions of polynomials over $\mathbb F_2$ (see \S\ref{section:comp} for definitions of these classes).

\newtheorem*{thm:kCNF}{\Cref{thm:kCNF}}
\newtheorem*{thm:F2conj}{\Cref{cor:parityconj} \& \ref{cor:polyconj}}

\begin{thm:kCNF}
Let $\C$ be the class of $k$-CNFs and let $\C^+$ be the class of monotone $k$-CNFs in $d$ variables.
Then
\begin{align*}
    \Omega(d^k) &\le \vcdim \C^+ \le \hdim \C^+ \le O(d^k)\\
    \Omega(d^k) &\le \vcdim \C \le \hdim \C \le O(d^k),
\end{align*}
where $\Omega$ and $O$ hides constants dependent on $k$ but independent of $d$.

\end{thm:kCNF}

\begin{thm:F2conj}
Homological dimension and VC dimension coincide for the class of conjunctions of parity functions.
The same holds more generally for conjunctions of degree-bounded polynomials over $\FF_2$.
\end{thm:F2conj}

\subsection*{Acknowledgements} The first, second, and the fourth authors acknowledge the support of Microsoft Research, where the third author hosted visits which led to this work. The third author also thanks Chris Meek for discussions and advice during the writing of this paper.

\section{Suboplexes and shatter complexes} \label{section:suboplexes}

Let $\C \subseteq [2]^{[n]}$ be a class of Boolean functions on $[n]$. Following \cite{Yan17}, we define a simplicial complex from $\C$ as follows:

\begin{defn} \label{simpCplxDef}
The \textit{suboplex} $\Diamond_\C$ associated to $\C$ is the simplicial complex with vertex set $[n] \times [2]$ corresponding to input-output pairs $(i, b)$ with $i \in [n]$, $b \in [2]$, and has facets given by graphs of functions in $\C$, i.e.\ $\text{facets}(\Diamond_\C) = \{ \{(i, f(i)) \mid i \in [n]\} \mid f \in \C \}$.
\end{defn}

\begin{rem} \label{rem:allVertices}
It is possible that some points in $[n] \times [2]$ do not appear in the graph of any function of $\C$.  For example, if $f(i) = 0$ for every $f\in\C$ for some $i \in [n]$, then no facet of $\Diamond_\C$ contains the vertex $(i,1)$ . In this case, $\Diamond_\C$ is the cone over the suboplex $\Diamond_{\C|_{[n]\setminus \{i\}}}$ of the restriction of $\C$ to $[n]\setminus \{i\}$, and passing to $[n] \setminus \{i\}$ loses no learning theoretic information about $\C$.
\end{rem}

We now define the main algebraic object of study:

\begin{defn} \label{SRidealDef}
The \textit{suboplex ideal} $I_\C$ is the Stanley-Reisner ideal associated to the simplicial complex $\Diamond_\C$.  Explicitly, in the polynomial ring $S := \mathbbm k[x_{(i,b)} \mid (i, b) \in [n] \times [2]]$ over a fixed field $\mathbbm k$, $I_\C$ is a squarefree monomial ideal whose monomial minimal generators are the minimal nonfaces of $\Diamond_\C$. We will often consider the Alexander dual of the suboplex ideal, 
i.e. $\displaystyle I^\star_\C := \langle \prod_{i \in [n]} x_{(i, 1 - f(i))} \mid f \in \C \rangle$.
\end{defn}

A \emph{partial function} on $[n]$ is a function $f: A \to [2]$ defined on some subset $A \subseteq [n]$, and we denote by $\operatorname{dom}(f) = A$ its domain.  Henceforth, the term ``function'' (without the modifier ``partial'') will always mean a complete function $[n] \to [2]$.
Given $A \subseteq B \subseteq [n]$ and partial functions $f : A \to [2]$, $g: B \to [2]$, we say that $f$ is a \emph{restriction} of $g$, or equivalently $g$ is an \emph{extension} of $f$, if $g|_A = f$.  
We can describe the monomial minimal generators of $I_\C$, which come in two types, as follows: an \textit{extenture} of $\C$ is a partial function $f$ which is not a restriction of a function in $\C$, although every proper restriction of $f$ is; and a \textit{functional monomial} is a quadratic monomial of the form $x_{(i,0)} x_{(i,1)}$ for some $i \in [n]$\footnote{Each such monomial, representing a minimal nonface of $\Diamond_\C$, encodes the fact that every function has to send $i \in [n]$ to a unique output in $[2]$, hence the name \emph{functional monomial}.}.

\begin{prop}[\cite{Yan17}, Proposition 2.38] \label{prop:mingenIC}
The ideal $I_\C$ is minimally generated by the functional monomials and monomials defined by extentures in the following way:
\[
I_\C = \Big \langle  x_{(i,0)} x_{(i,1)} \mid i \in [n] \Big\rangle + \left\langle \prod_{i \in \text{dom}(f)} x_{(i, f(i))} \; \Big| \; f \text{ extenture of } \C \right \rangle.
\]
\end{prop}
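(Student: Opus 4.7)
The plan is to directly identify the minimal nonfaces of the suboplex $\Diamond_\C$ and translate them into monomials; since $I_\C$ is the Stanley--Reisner ideal of $\Diamond_\C$, its monomial minimal generators are precisely the squarefree monomials supported on the minimal nonfaces. I would dichotomize subsets $F \subseteq [n] \times [2]$ according to whether there exists some $i \in [n]$ with both $(i,0), (i,1) \in F$.

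In the first case, $F$ cannot lie in the graph of any (single-valued) function on $[n]$, so in particular $F$ is not contained in any facet of $\Diamond_\C$; thus $F$ is a nonface, and the two-element subset $\{(i,0),(i,1)\}$ is already a nonface. Hence the only minimal nonfaces of this type are $\{(i,0),(i,1)\}$ for $i \in [n]$, giving exactly the functional monomials $x_{(i,0)} x_{(i,1)}$.

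In the second case, no index $i$ appears twice, so $F$ is the graph of a uniquely determined partial function $f : A \to [2]$ with $A = \{i : (i,b) \in F \text{ for some } b\}$. By definition of the facets of $\Diamond_\C$, such an $F$ is a face if and only if $f$ is the restriction of some function in $\C$; hence $F$ is a nonface if and only if $f$ admits no extension to $\C$. Minimality of $F$ then says every proper subset of $F$ is a face, equivalently every proper restriction of $f$ extends to a function in $\C$; this is precisely the definition of an extenture. The associated monomials are $\prod_{i \in \operatorname{dom}(f)} x_{(i,f(i))}$.

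To conclude that the union is a minimal generating set, I would note that no functional monomial divides an extenture monomial (an extenture is a genuine partial function and so its monomial contains at most one of $x_{(i,0)}, x_{(i,1)}$ for each $i$), and conversely no extenture monomial divides a functional monomial (by degree considerations unless the extenture is itself a functional pair, which is excluded). Within each family, distinct elements are incomparable under divisibility by construction. I do not anticipate any serious obstacle here; the proof is essentially an unpacking of \Cref{simpCplxDef} and the definition of an extenture, with the only subtlety being the clean separation of the two generator types.
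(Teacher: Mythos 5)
Your decomposition of minimal nonfaces into the two types (sets containing some $\{(i,0),(i,1)\}$ versus graphs of partial functions) is exactly the right approach, and the core of the argument --- identifying Type-2 minimal nonfaces with extentures by noting that proper subsets of $F$ are precisely the graphs of proper restrictions of $f$ --- is carried out correctly. The paper simply cites this proposition to \cite{Yan17}, so this direct unpacking of Definition \ref{simpCplxDef} and the Stanley--Reisner correspondence is the intended proof.

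However, there is a gap in the last step. You dismiss the possibility that an extenture monomial divides a functional monomial ``by degree considerations unless the extenture is itself a functional pair.'' But an extenture monomial has exactly one variable per index in its domain, so a degree-$2$ extenture monomial involves two distinct indices and indeed cannot divide $x_{(i,0)}x_{(i,1)}$ --- the actual danger comes from degree $1$, not degree $2$. If $\C \neq \emptyset$ and some $(i,b)$ lies in no facet of $\Diamond_\C$ (the degenerate situation flagged in Remark \ref{rem:allVertices}), then the singleton partial function $f\colon\{i\}\to[2]$ with $f(i)=b$ is an extenture (its only proper restriction, the empty function, does extend), and its monomial $x_{(i,b)}$ divides $x_{(i,0)}x_{(i,1)}$. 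In that case $\{(i,0),(i,1)\}$ is not even a minimal nonface (since $\{(i,b)\}$ is already a nonface), and the stated generating set is not minimal. So either the proposition carries the implicit nondegeneracy hypothesis that every point of $[n]\times[2]$ appears in some facet --- which Remark \ref{rem:allVertices} says can be assumed without loss --- or the minimality claim needs that caveat. Your writeup should either invoke this hypothesis explicitly before concluding minimality, or observe that under it there are no degree-$1$ extentures, which is what your ``degree considerations'' actually require.
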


Having defined the ideals $I_\C$ and $I^\star_\C$, one can then interpret algebraic properties of $I_\C$ and $I^\star_\C$ in terms of the function class $\C$, and vice versa.  For instance:

\begin{defn}{\cite[Definition 11.6]{Yan17}} \label{defn:hdim}
The \textit{homological dimension} of $\C$, denoted $\hdim \C$, is defined to be the projective dimension of $I^\star_\C$, i.e.
\[
\hdim \C := \pdim I^\star_\C = \pdim S/I^\star_\C - 1.
\]
\end{defn}

\begin{rem} \label{hdimRegularity}
By \cite[Theorem 5.59]{MS05}, the projective dimension of a squarefree monomial ideal is related to the (Castelnuovo-Mumford) regularity of its Alexander dual in the following way:
\[
\hdim \C = \pdim(S/I^\star_\C) - 1 = \reg(I_\C) - 1 = \reg(S/I_\C).
\]
\end{rem}

Next, we interpret VC dimension algebraically.
Note that if $U$ is shattered by $\C$ (recall \Cref{defn:VCdim}), and $U' \subseteq U$, then $U'$ is also shattered by $\C$. Thus the sets shattered by $\C$ form a simplicial complex $\mathcal{SH}_\C$, called the \textit{shatter complex} of $\C$. In this way, the VC dimension of $\C$ is one more than the dimension of $\mathcal{SH}_\C$.

\begin{defn} \label{collapseMapDef}
Define the \textit{collapse map} $\pi : S \to T := \mathbbm k[y_i \mid i \in [n]]$ sending $x_{(i, b)} \mapsto y_i$. This is a surjective map, with kernel given by $\langle x_{(i,0)} - x_{(i,1)} \mid i \in [n]\rangle$, generated by linear binomials naturally corresponding to the functional monomials.
\end{defn}

The following observation is an easy consequence of \Cref{prop:mingenIC}:

\begin{prop}{\cite[Theorem 3.3]{Yan17}} \label{relatingShToDelta}
Let $I_{\mathcal{SH}_\C}$ be the Stanley-Reisner ideal of $\mathcal{SH}_\C$ in the ring $T$. Then $\pi(I_\C) = I_{\mathcal{SH}_\C} + \langle y_i^2 \mid i \in [n] \rangle$. Equivalently, $U \in \mathcal{SH}_\C$ if and only if $\prod_{i\in U}y_i \not \in \pi(I_\C)$.
\end{prop}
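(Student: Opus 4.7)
The plan is to verify the second (equivalent) statement: a squarefree monomial $\prod_{i\in U}y_i$ lies in $\pi(I_\C)$ if and only if $U$ is not shattered by $\C$. This implies the ideal equality, since both $\pi(I_\C)$ and $I_{\mathcal{SH}_\C} + \langle y_i^2 \mid i\in[n]\rangle$ are monomial ideals containing every $y_i^2$, and any such ideal is determined by the set of squarefree monomials it contains.

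The first step is to read off the generators of $\pi(I_\C)$ from \Cref{prop:mingenIC}: the functional monomials $x_{(i,0)}x_{(i,1)}$ map to $y_i^2$, and the extenture generators $\prod_{i\in\operatorname{dom}(f)} x_{(i,f(i))}$ map to $\prod_{i\in\operatorname{dom}(f)} y_i$. Since a squarefree monomial cannot be divisible by any $y_i^2$, the membership $\prod_{i\in U} y_i \in \pi(I_\C)$ is equivalent to the existence of an extenture $f$ of $\C$ with $\operatorname{dom}(f) \subseteq U$.

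For the forward direction, suppose $U$ is shattered and assume for contradiction that some extenture $f$ has $A := \operatorname{dom}(f) \subseteq U$. Extend $f$ arbitrarily to a total function $\tilde f : U \to [2]$; since $U$ is shattered, $\tilde f = h|_U$ for some $h\in\C$, whence $f = h|_A$ is itself a restriction of a function in $\C$, contradicting the definition of extenture. For the reverse direction, suppose $U$ is not shattered, so there is some $g : U \to [2]$ that is not the restriction of any function in $\C$. Choose $A \subseteq U$ of minimum size such that $g|_A$ still fails to extend to a function in $\C$; then every proper restriction of $g|_A$ does extend by the minimality of $A$, making $g|_A$ an extenture with domain contained in $U$, so $\prod_{i\in U} y_i \in \pi(I_\C)$.

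The whole argument is essentially unpacking the definition of an extenture as a minimal non-extendable partial function against the generator description of $I_\C$ from \Cref{prop:mingenIC}, and there is no real obstacle beyond making the minimization in the reverse direction explicit.
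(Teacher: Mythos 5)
Your proof is correct. The paper does not actually give a proof of this proposition; it only notes that it is an easy consequence of \Cref{prop:mingenIC} and cites \cite[Theorem 3.3]{Yan17}. Your argument is exactly the natural unpacking the paper alludes to: pushing the generators from \Cref{prop:mingenIC} through $\pi$ (a surjective ring map, so images of generators generate the image ideal), reducing to squarefree monomials since every $y_i^2 \in \pi(I_\C)$, and then checking both directions of the equivalence between shattering of $U$ and nonexistence of an extenture with domain contained in $U$ directly from the minimality clause in the definition of extenture. In particular, the minimization step in your reverse direction is precisely what produces the extenture, and the contradiction in the forward direction uses that an extenture is by definition not a restriction of any function in $\C$. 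Nothing is missing.
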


This yields the following algebraic description of VC dimension:

\begin{cor}
(With notation as in \Cref{collapseMapDef}) $\vcdim \C = \reg(T/\pi(I_\C))$. 
\end{cor}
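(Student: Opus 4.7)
The strategy is to observe that $T/\pi(I_\C)$ is an Artinian graded ring whose Hilbert function is supported on the faces of $\mathcal{SH}_\C$, and then to recall that the Castelnuovo--Mumford regularity of an Artinian graded module over a polynomial ring coincides with its top nonvanishing degree.

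First, I would invoke \Cref{relatingShToDelta} to rewrite $\pi(I_\C) = I_{\mathcal{SH}_\C} + \langle y_i^2 \mid i \in [n]\rangle$. The addition of the squares makes the quotient ring finite-dimensional as a $\mathbbm k$-vector space: every monomial in $T/\pi(I_\C)$ involving a repeated variable vanishes, so the nonzero monomials are precisely the squarefree monomials $\prod_{i\in U} y_i$ where $U$ ranges over the faces of the shatter complex $\mathcal{SH}_\C$. These classes form a $\mathbbm k$-basis, and the degree of such a class is $|U|$.

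Next, I would use the fact that for an Artinian graded module $M$ over the polynomial ring $T$ with graded maximal ideal $\mathfrak m$, the only nonzero local cohomology module is $H^0_{\mathfrak m}(M) = M$ itself. By the definition $\reg M = \max\{j - i : H^i_{\mathfrak m}(M)_j \neq 0\}$, this yields $\reg M = \max\{j : M_j \neq 0\}$. Applied to the Artinian ring $T/\pi(I_\C)$, this top nonvanishing degree is exactly
\[
\max\bigl\{|U| : U \in \mathcal{SH}_\C\bigr\} = \dim \mathcal{SH}_\C + 1 = \vcdim \C,
\]
where the first equality is the definition of the dimension of a simplicial complex and the second follows from the observation preceding \Cref{collapseMapDef}.

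There is essentially no obstacle here: once one notices that attaching the squares $y_i^2$ truncates the Stanley--Reisner ring of $\mathcal{SH}_\C$ into an Artinian object, the statement reduces to the standard fact that regularity of an Artinian module is read off its Hilbert function. The only small care needed is to justify that step about local cohomology (or, equivalently, to argue directly from a minimal free resolution of $T/\pi(I_\C)$ that the largest shift in the last free module equals the top socle degree, which is $\vcdim \C$).
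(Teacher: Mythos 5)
Your proof is correct and follows essentially the same route as the paper: observe that the squares $y_i^2$ make $T/\pi(I_\C)$ Artinian with a $\mathbbm k$-basis of squarefree monomials indexed by $\mathcal{SH}_\C$, so regularity equals the top nonvanishing degree, which is $\vcdim \C$. The only difference is cosmetic — you justify the fact that regularity of an Artinian module is its top nonzero degree via local cohomology, whereas the paper simply cites \cite[Exercise 20.18]{Eis05}.
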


\begin{proof}
Since $\langle y_i^2 \mid i \in [n] \rangle \subseteq \pi(I_\C)$, the quotient ring $T/\pi(I_\C)$ is Artinian and has a basis consisting of squarefree monomials. Hence, by \cite[Exercise 20.18]{Eis05}, we have $$\reg(T/\pi(I_\C)) = \max\{d \mid (T/\pi(I_\C))_d \ne 0\} = \max\{|U| \mid \prod_{i \in U} y_i \not\in \pi(I_\C)\}.$$
By \Cref{relatingShToDelta}, the last expression equals $\max\{|U| \mid U \in \mathcal{SH}_\C\} = \vcdim \C$.
\end{proof}

We now recall the following central result, relating the VC dimension and homological dimension of an arbitrary function class:

\begin{thm}{\cite[Theorem 3.11]{Yan17}} \label{thm:vcdim<hdim}
Let $\C \subseteq [2]^{[n]}$ be a function class. Then $\vcdim \C \le \hdim \C$.
\end{thm}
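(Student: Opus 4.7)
The plan is to reduce the inequality to a single non-vanishing multigraded Betti number of $S/I_\C$ via Hochster's formula, then invoke \Cref{hdimRegularity}, which identifies $\hdim \C$ with $\reg(S/I_\C)$. Let $U \subseteq [n]$ be a subset shattered by $\C$ with $|U| = d := \vcdim \C$, and set $\sigma := U \times [2] \subseteq [n] \times [2]$, viewed as a squarefree multidegree of total degree $2d$. The whole proof will rest on showing $\beta_{d,\sigma}(S/I_\C) \ne 0$, since this yields $\reg(S/I_\C) \ge |\sigma| - d = d$, and hence $\hdim \C \ge d = \vcdim \C$.

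The main step is to identify the restricted complex $\Diamond_\C|_\sigma := \{F \in \Diamond_\C : F \subseteq \sigma\}$ topologically. By \Cref{prop:mingenIC}, the minimal nonfaces of $\Diamond_\C$ are the functional pairs $\{(i,0),(i,1)\}$ together with the extentures, so $F \subseteq \sigma$ is a face of $\Diamond_\C$ precisely when $F$ contains no functional conflict and the resulting partial function on a subset of $U$ extends to some $f \in \C$. Since $U$ is shattered, every function $U \to [2]$, and therefore every partial function on any subset of $U$, extends to a function in $\C$. Hence $\Diamond_\C|_\sigma$ is exactly the simplicial complex of all conflict-free subsets of $U \times [2]$, which is the iterated join $\ast_{i \in U} \{(i,0),(i,1)\}$ of $d$ copies of $S^0$. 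This is the boundary of the $d$-dimensional cross-polytope, a triangulated $(d{-}1)$-sphere, so $\widetilde H_{d-1}(\Diamond_\C|_\sigma;\mathbbm k) \cong \mathbbm k$ while all other reduced homologies vanish.

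Applying Hochster's formula $\beta_{i,\sigma}(S/I_\C) = \dim_\mathbbm{k} \widetilde H_{|\sigma| - i - 1}(\Diamond_\C|_\sigma;\mathbbm k)$ with $i = d$ and $|\sigma| = 2d$ yields $\beta_{d,\sigma}(S/I_\C) = 1$, completing the proof as outlined. The only step requiring genuine thought is the identification of $\Diamond_\C|_\sigma$ with the cross-polytope boundary, but this is essentially a direct unwinding of the definition of shattering; the rest of the argument is standard squarefree-monomial machinery combined with \Cref{hdimRegularity}. An alternative route, which I expect to give the same bound with slightly more work, would instead compare $\reg(S/I_\C)$ with $\reg(T/\pi(I_\C)) = \vcdim \C$ directly via the fact that modding out by a linear form cannot strictly increase Castelnuovo--Mumford regularity, iterated along the linear forms $x_{(i,0)} - x_{(i,1)}$ generating $\ker \pi$.
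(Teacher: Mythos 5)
The paper does not supply its own proof of this statement; it cites \cite[Theorem 3.11]{Yan17}. So I can only assess your argument on its own merits.

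Your primary argument via Hochster's formula is correct and clean. The identification of $\Diamond_\C|_\sigma$ is the heart of the proof: a face $F\subseteq\sigma=U\times[2]$ of $\Diamond_\C$ is a conflict-free set that is a restriction of some $f\in\C$, and shattering of $U$ guarantees that \emph{every} partial function on a subset of $U$ extends to some $f\in\C$ (extend arbitrarily to all of $U$, then invoke shattering). Hence $\Diamond_\C|_\sigma$ is the join $\ast_{i\in U}\{(i,0),(i,1)\}$ of $d$ copies of $S^0$, the boundary of the $d$-dimensional cross-polytope, giving $\widetilde H_{d-1}\cong\mathbbm k$ and nothing else. Hochster's formula $\beta_{i,\sigma}(S/I_\C)=\dim_{\mathbbm k}\widetilde H_{|\sigma|-i-1}(\Diamond_\C|_\sigma;\mathbbm k)$ with $i=d$, $|\sigma|=2d$ then gives $\beta_{d,\sigma}=1$, so $\reg(S/I_\C)\ge|\sigma|-d=d$, and \Cref{hdimRegularity} converts this into $\hdim\C\ge d=\vcdim\C$. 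This also handles the degenerate case $d=0$ with $\sigma=\emptyset$. The surrounding development in \S\ref{section:suboplexes} (in particular $\vcdim\C=\reg(T/\pi(I_\C))$) hints that the cited proof in \cite{Yan17} proceeds by comparing regularities across the collapse map, so your route is plausibly a genuinely different and arguably more self-contained argument: it avoids that comparison entirely by exhibiting a single explicit topological sphere inside $\Diamond_\C$.

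One caution on your sketched ``alternative route'': the assertion that modding out by a linear form cannot increase Castelnuovo--Mumford regularity is false for arbitrary (non-generic) linear forms, and the forms $x_{(i,0)}-x_{(i,1)}$ generating $\ker\pi$ need not be nonzerodivisors on $S/I_\C$. So that route, as stated, has a real gap and would require a different justification (for instance exploiting the specific combinatorial relationship between $I_\C$ and $\pi(I_\C)$), whereas your primary Hochster argument is complete.
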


A natural question to ask is when equality in \Cref{thm:vcdim<hdim} holds. In general, the difference between $\hdim \C$ and $\vcdim \C$ can be arbitrarily large:

\begin{eg}
Consider the class $\C = \{ \delta_i \mid 1 \le i \le n \}$ of delta functions on $[n]$, where $\delta_i(j) = 1 \iff i = j$. Since the constant function $0$ is an extenture of $\C$ with domain of size $n$, the homological dimension $\hdim \C$ is at least $n-1$ (in fact, $\hdim \C = n-1$). However, $\C$ cannot shatter any subset of size $> 1$, so $\vcdim \C = 1$.
\end{eg}

This example also shows that $\vcdim \C$ cannot always be sandwiched between $\operatorname{maxdeg} I_\C - 1$ (i.e., one less than the maximal size domain of an extenture) and $\hdim \C$. 

\section{Posets and order complexes} \label{section:posets}

In light of the inequality $\vcdim \C \le \hdim \C$, the importance of determining homological invariants --- in particular a free resolution --- of $I^\star_\C$ becomes clear.
To this end, we now bring additional combinatorics into the picture, by viewing function classes as arising from posets. In doing so we lose no generality, and at the same time gain methods and viewpoints to attack our motivating question of resolving $I^\star_\C$.

Let $2^{[n]}$ be the Boolean poset of all subsets of $[n]$, partially ordered by inclusion. We consider subposets $(\P, \le)$ of $2^{[n]}$ which are compatible with the ambient Boolean poset, so that if $A \le B$ in $\P$, then $A \subseteq B$ as subsets of $[n]$. Let $\C(\P) \subseteq [2]^{[n]}$ be the function class associated to $\P$ by identifying subsets with their indicator functions: notationally, we distinguish between $2^{[n]}$ for sets and $[2]^{[n]}$ for functions.

\begin{rem}
There is a natural $(\mathbb{Z}/2\mathbb{Z})^n$ action on $[2]^{[n]}$ by flipping $0$ and $1$ in the outputs, which induces an action on the set of all function classes on $[n]$.
The learning-theoretic properties considered in this paper, most notably $\vcdim \C$, are invariant under this action.
Thus, any results for the function class $\C(\P)$ also apply to any other function class in the orbit of $\C(\P)$ under the $(\mathbb{Z}/2\mathbb{Z})^n$-action.
\end{rem}

We fix the following notation for a finite poset $\P$:
\begin{itemize}
\item By $A \le B \in \P$ we mean ``$A, B \in \P$ with $A \le B$''.
\item For $A\leq B \in \P$, we let $[A,B]$ (resp.\ $(A,B)$) denote the closed (resp.\ open) interval $$[A,B] := \{C \in \P \mid A\leq C \leq B\}, \quad (A,B) := \{C \in \mathcal  P \mid A < C < B\}.$$
\item We denote by $Ch_i(\P)$ the set of $i$-chains in $\P$, i.e.
$$Ch_i(\P) := \{C_0 < C_1 < \cdots < C_i \mid C_j \in \P \ \forall j=0,\ldots, i\}$$
and also $Ch(\P) :=\bigcup_{i\geq -1} Ch_i(\P)$, where $Ch_{-1}(\P) := \{\emptyset \}$.
\item The \emph{rank} of a poset $\P$ is $\rank(\P) := \max\{ i \ | \ Ch_i(\P) \neq \emptyset \}$.
\item The poset $\P$ is \emph{bounded} if it has a unique minimal element, denoted $\hat 0$, and a unique maximal element, denoted $\hat 1$.
\end{itemize}

\begin{defn}\label{defn:trunc}
Let $\P$ be a poset.  The \emph{order complex} $\Delta_{\P}$ \emph{associated to} $\P$ is the simplicial complex whose $i$-faces are the $i$-chains $Ch_i(\P)$.  In particular, the vertices of $\Delta_\P$ are the elements of $\P$, and the facets of $\Delta_\P$ are maximal chains in $\P$.

It is convenient to have the following variant of the order complex construction: Let $\P$ be a bounded poset. The \emph{truncated order complex} $\overline{\Delta}_{\P}$ is a simplicial complex whose $i$-faces are the $i$-chains in $Ch_i(\P)$ that neither begin with $\hat 0$ nor end with $\hat 1$.
In other words,
\[
\overline{\Delta}_\P :=
\begin{cases}
\Delta_{\P\setminus \{\hat 0, \hat 1\}} & \textnormal{if }\rank(\P) \geq 2\\
\{\emptyset \} & \textnormal{if }\rank(\P) = 1\\
\emptyset & \textnormal{if }\rank(\P) =0
\end{cases}
\]
where $\{\emptyset \}$ is the \emph{empty complex} which has a single face (namely the empty set), and $\emptyset$ is the \emph{null complex} which has no faces. By convention, for $A \le B \in \P$, we take $\Delta_{(A,B)}$ to equal $\overline{\Delta}_{[A,B]}$, even when $\rank([A,B]) = 1$ so that $(A,B)$ is the empty poset.

\end{defn}

\begin{rem}\label{rem:truncatedOrderComplex}
Two key observations are in order. First, the empty complex $\{\emptyset \}$ is distinguished by the following fact: the $(-1)$-th reduced homology of a simplicial complex is nonzero if and only if the complex is the empty complex. Second, it is easily seen that the subcomplex of $\Delta_{\P}$ consisting of chains that include $\hat 0$ or $\hat 1$, but not both, is the suspension of the truncated order complex $\overline{\Delta}_{\mathcal{P}}$.
\end{rem}

Recall that our goal is to give a free resolution, as well as Betti numbers, of the dual ideal $I^\star_{\C(\P)}$ in terms of the poset $\P$.  We prepare by fixing a convenient dictionary between monomials and (partial) functions, which will be used to describe the monomial minimal generators of $I^\star_\C$ and their least common multiples.

\begin{defn}\label{defn:dictionary}
For subsets $A \subseteq B \subseteq [n]$, define corresponding partial functions and monomials
\[
\delta(A,B)(i) := \begin{cases}
1 & \textnormal{ if }i\in A\\ 
0 & \textnormal{ if }i\not\in B
\end{cases} \qquad \longleftrightarrow \qquad m(A, B) := \prod_{i\in A} x_{(i,0)} \prod_{i\not\in B} x_{(i,1)} \prod_{i\in B\setminus A} x_{(i,0)}x_{(i,1)}
\]
where $\delta(A, B) : A \sqcup ([n] \setminus B) \to [2]$ is a partial function, and $m(A, B) \in S = \mathbbm k[x_{(i,b)} \mid (i, b) \in [n] \times [2]]$ is a monomial of degree $|A| + (n - |B|) + 2(|B \setminus A|) = n + |B \setminus A|$.
\end{defn}

\begin{rem}\label{rem:dictionaryremark}
We record some straightforward but useful observations relating subsets and partial functions:
\begin{enumerate}
    \item For any function $f: [n] \to [2]$, one has $f = \delta(A,A)$ where $A = f^{-1}(1)$.
    \item For $C\subseteq D \subseteq [n]$ and $A\subseteq B\subseteq [n]$, the partial function $\delta(C,D)$ extends $\delta(A,B)$ if and only if $A\subseteq C\subseteq D \subseteq B$.  In this case, we write $\delta(C,D) \supseteq \delta(A,B)$.
    \item For partial functions $f,g$ on $[n]$, define their intersection $f\cap g$ to be the partial function defined on $\{i \in \operatorname{dom}(f) \cap \operatorname{dom}(g) \mid f(i) = g(i)\}$ by $(f\cap g)(i) := f(i) = g(i)$.  Then for $C\subseteq D \subseteq [n]$ and $A\subseteq B\subseteq [n]$,
    $$\delta(C,D) \cap \delta(A,B) = \delta(C\cap A, D\cup B).$$
\end{enumerate}
\end{rem}

The next lemma collects more facts about the dictionary \ref{defn:dictionary} relating partial functions with monomials which will be used in the sequel; we leave the easy verifications to the reader. 

\begin{lem} \label{lem:dictionary}
Let $\P \subseteq 2^{[n]}$ be a poset, and $\C(\P) \subseteq [2]^{[n]}$ the associated function class.
\begin{enumerate}
    \item For two monomials $m, m'$, we write $m \preceq m'$ to mean that $m$ divides $m'$, and we let $\operatorname{lcm}(m,m')$ denote the least common multiple of $m$ and $m'$.  Then
    $$\delta(C,D) \supseteq \delta(A,B) \iff m(C,D) \preceq m(A,B) \textnormal{, and}$$
    $$m(C\cap A, D\cup B) = \operatorname{lcm}(m(C,D), m(A,B)).$$
    \item The ideal $I^\star_{\C(\P)}$ is minimally generated by the monomials
    \[
    I^\star_{\C(\P)} = \langle m(A,A) \mid A \in \P  \rangle.
    \]
    \item The dictionary \ref{defn:dictionary} gives an order-reversing isomorphism between the lcm-semilattice of monomial minimal generators of $I^\star_{\C(\P)}$ and the semilattice of partial functions generated by intersections of functions in $\C(\P)$. 
    \item A squarefree monomial $m \in S$ is of the form $m(A,B)$ for some $A\subseteq B\subseteq [n]$ if and only if $x_{(i,0)}$ or $x_{(i,1)}$ divides $m$, for all $i \in [n]$.
\end{enumerate}
\end{lem}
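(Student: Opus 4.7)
The four claims are described as easy verifications, and indeed they all reduce to careful unwinding of the dictionary of \Cref{defn:dictionary}. The plan is to prove them in order, with part (1) serving as the main bookkeeping lemma that feeds directly into parts (2), (3), and (4).

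For part (1), I reduce the extension condition $\delta(C,D) \supseteq \delta(A,B)$ to the chain of inclusions $A \subseteq C \subseteq D \subseteq B$ via \Cref{rem:dictionaryremark}(2), then partition $[n]$ into the five pieces $A$, $C\setminus A$, $D\setminus C$, $B\setminus D$, and $[n]\setminus B$. On each piece I compare the exponents of $x_{(i,0)}$ and $x_{(i,1)}$ in $m(A,B)$ and $m(C,D)$, verifying divisibility index by index. The converse direction follows similarly: divisibility $m(C,D)\preceq m(A,B)$ forces each $x_{(i,b)}$ appearing in $m(C,D)$ to appear in $m(A,B)$, which translates back to the four-set inclusion chain. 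For the lcm identity, I check each variable separately: $x_{(i,0)}$ divides $m(A,B)$ exactly when $i \in B$, so it divides $\operatorname{lcm}(m(C,D), m(A,B))$ exactly when $i \in D \cup B$, matching the contribution of $x_{(i,0)}$ to $m(C\cap A, D\cup B)$; the case of $x_{(i,1)}$ is symmetric.

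Part (2) follows by substituting $\C = \C(\P)$ and $f = \delta(A,A)$ into the defining expression for $I^\star_\C$ from \Cref{SRidealDef}: the product $\prod_{i\in[n]} x_{(i,\,1-\delta(A,A)(i))}$ collapses to $m(A,A)$ (since $B\setminus A = \emptyset$), and minimality is automatic since all listed generators have the same degree $n$. For part (3), I apply part (1) to pairs of generators $m(A,A)$ and $m(C,C)$: part (1) yields the order-reversing property and the translation between lcm and intersection, while \Cref{rem:dictionaryremark}(3) gives $\delta(A,A) \cap \delta(C,C) = \delta(A\cap C,\, A \cup C)$, matching $\operatorname{lcm}(m(A,A), m(C,C)) = m(A\cap C,\, A \cup C)$. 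Together these verify that the dictionary induces an order-reversing isomorphism of semilattices.

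For part (4), one direction is immediate from the definition of $m(A,B)$. For the converse, given a squarefree $m$ satisfying that $x_{(i,0)}$ or $x_{(i,1)}$ divides $m$ for every $i$, I set $B := \{i \in [n] : x_{(i,0)} \text{ divides } m\}$ and $A := \{i \in B : x_{(i,1)} \text{ does not divide } m\}$, then verify $m = m(A,B)$ by checking contributions on each of the three pieces $A$, $B\setminus A$, $[n]\setminus B$. The only real obstacle throughout is to keep straight the Alexander-dual flip $b \leftrightarrow 1-b$, namely the convention that $\delta(A,B)(i) = 1$ corresponds to the factor $x_{(i,0)}$ in $m(A,B)$; once this is in place every step reduces to direct substitution.
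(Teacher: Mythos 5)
Your proposal is correct. The paper itself gives no proof here (the proof is literally marked ``Omitted,'' with the text ``we leave the easy verifications to the reader''), so there is no reference argument to compare against; your write-up simply fills that gap, and it does so correctly. A small simplification worth noting for part (1): the monomial can be rewritten compactly as $m(A,B) = \prod_{i\in B} x_{(i,0)} \prod_{i\notin A} x_{(i,1)}$, after which divisibility $m(C,D)\preceq m(A,B)$ is immediately equivalent to $D\subseteq B$ and $A\subseteq C$, and the lcm identity falls out coordinate-by-coordinate without the five-piece partition; this is essentially the observation you make for the lcm calculation, and it streamlines the extension/divisibility equivalence as well. Your observation in part (2) that minimality is automatic because all generators have the common degree $n$ (so none can properly divide another) is exactly the right shortcut, and your reconstruction of $(A,B)$ from $m$ in part (4) via $B = \{i : x_{(i,0)}\mid m\}$ and $A = \{i\in B : x_{(i,1)}\nmid m\}$ is clean and correct.
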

\begin{proof}
Omitted.
\end{proof}

\section{Intersection-closed function classes}\label{section:semilattice}

We now specialize to the main family of function classes under consideration.

\begin{defn} \label{defn:intclosed}
A poset $\P \subseteq 2^{[n]}$ is \emph{intersection-closed} if $A,B\in \P \implies A\cap B \in \P$. In this case, we also say the associated function class $\C(\P)$ is \emph{intersection-closed}. For any poset $\P \subseteq 2^{[n]}$ (not necessarily intersection-closed) and any subset $A\subseteq [n]$, set $V(A) := \{ B \in \P \mid A \subseteq B \}$, and define the \emph{closure} of $A$ with respect to $\P$ as $\displaystyle \overline{A} := \bigcap_{B \in V(A)} B$.
\end{defn}

A poset $\P \subseteq 2^{[n]}$ is intersection-closed if and only if: for any $A \subseteq [n]$, $V(A) \ne \emptyset \iff \overline{A} \in \P$.
As we shall see, besides forming a natural class of examples, intersection-closed function classes allow for rich interplay between algebra, combinatorics, and order theory. Intersection-closed function classes were also studied in \cite{HSW89}.
To help build intuition about these notions, we leave the proof of the following simple observation to the reader:

\begin{lem}\label{lem:posetshatter}
Let $\P \subseteq 2^{[n]}$ be an intersection-closed poset, and $U \subseteq [n]$.
Then $U$ is shattered by $\C(\P)$ if and only if $\overline{A} \cap U = A$ for all $A \subseteq U$, or equivalently $\overline{A} \cap (U \setminus A) = \emptyset$ for all $A \subseteq U$.
\end{lem}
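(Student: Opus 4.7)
The plan is to translate shattering of $U$ into a concrete condition on subsets via the indicator function identification, and then verify the equivalence with $\overline{A}\cap U = A$ directly from the definition of closure, using intersection-closedness only to guarantee that $\overline{A}$ itself lies in $\P$.

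First, I would unpack shattering. A function $g\colon U\to [2]$ is determined by its support $g^{-1}(1)\subseteq U$, and the restriction of $\ind_B$ (for $B\in \P$) to $U$ has support $B\cap U$. Thus ``every function on $U$ is a restriction of some function in $\C(\P)$'' becomes: for every $A\subseteq U$, there exists $B\in \P$ with $B\cap U = A$. The task is now to verify that this cutting condition is equivalent to $\overline{A}\cap U = A$ for all $A\subseteq U$.

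For the forward direction, fix $A\subseteq U$ and a witness $B_A \in \P$ with $B_A\cap U = A$; then $B_A\in V(A)$, so $V(A)\neq \emptyset$ and intersection-closedness gives $\overline{A}\in \P$, together with the chain $A\subseteq \overline{A}\subseteq B_A$. Intersecting with $U$ yields $A\subseteq \overline{A}\cap U \subseteq B_A\cap U = A$, hence $\overline{A}\cap U = A$. For the reverse direction, the set $B := \overline{A}$ lies in $\P$ by intersection-closedness and is itself a cutting witness, since $B\cap U = \overline{A}\cap U = A$.

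Finally, the two stated forms are interchangeable: since $A\subseteq \overline{A}$ by the definition of closure and $A\subseteq U$ by assumption, the inclusion $A\subseteq \overline{A}\cap U$ is automatic, so $\overline{A}\cap U = A$ reduces to the reverse inclusion $\overline{A}\cap U \subseteq A$. Partitioning $U = A\sqcup (U\setminus A)$, this last inclusion is exactly $\overline{A}\cap (U\setminus A) = \emptyset$. The argument is essentially a translation exercise, with no real obstacle; the only subtlety worth flagging is the use of intersection-closedness to ensure that the candidate witness $\overline{A}$ genuinely belongs to $\P$ and can therefore serve as the element of $\C(\P)$ realising the desired restriction.
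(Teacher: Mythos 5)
The paper explicitly leaves this lemma as an exercise (``we leave the proof of the following simple observation to the reader''), so there is no paper proof to compare against; your argument is exactly the intended one, and the translation $g \leftrightarrow g^{-1}(1)$ turning shattering into ``for every $A \subseteq U$ some $B \in \P$ satisfies $B \cap U = A$'' is the right first move.

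One small point deserves tightening. In the reverse direction you invoke intersection-closedness to place $\overline{A}$ in $\P$, but intersection-closedness only guarantees $\overline{A} \in \P$ \emph{when} $V(A) \neq \emptyset$; the definition $\overline{A} = \bigcap_{B \in V(A)} B$ is an empty intersection otherwise (conventionally $[n]$, which need not lie in $\P$). In your forward direction you correctly establish $V(A) \neq \emptyset$ from the witness $B_A$ before touching $\overline{A}$, but in the reverse direction you use $\overline{A}$ without having checked that $V(A) \neq \emptyset$. To close this, observe that the hypothesis $\overline{A} \cap U = A$ already rules out $V(A) = \emptyset$ for any proper $A \subsetneq U$ (else $\overline{A} \cap U = U \neq A$), and for $A = U$ one should note that $V(U) = \emptyset$ would contradict the hypothesis under any sensible reading where $\overline{U}$ is required to be a genuine element of $\P$. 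In the paper's applications this never arises (e.g.\ the lattice of flats of a matroid has a top element, so $V(A) \neq \emptyset$ always), but since you flag exactly this use of intersection-closedness as ``the only subtlety,'' it is worth stating the $V(A) \neq \emptyset$ check explicitly rather than leaving it implicit.
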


We are now ready for our main result: an explicit free resolution of $I^\star_{\C(\P)}$ from the combinatorial data of the intersection-closed poset $\P$. To be precise, we construct a cellular free resolution of $I^\star_{\C(\P)}$ on the order complex $\Delta_\P$ as follows: label each vertex $A$ of $\Delta_\P$ by the monomial 
\[
m(A,A) = \prod_{i\in A} x_{(i,0)} \prod_{i\not\in A} x_{(i,1)}
\]
(note that under the dictionary \ref{defn:dictionary}, these monomials correspond precisely to functions $f \in \C(\P)$), and label each face of $\Delta_\P$ by the lcm of the monomials of its vertices.  Such a labeling of $\Delta_\P$ defines a complex $\mathcal{F}(\Delta_\P)$ of free $S$-modules: 

\[
\mathcal{F}(\Delta_\P): \ldots \to F_i \xrightarrow{\partial_i} F_{i-1} \xrightarrow{\partial_{i-1}} \ldots \xrightarrow{\partial_1} F_0 \xrightarrow{\partial_0} F_{-1} \to 0
\]
where $F_i = \oplus_m S(-m)^{\beta_{i,m}}$ is a free $S$-module with basis given by $i$-faces of $\Delta_\P$ and monomial shifts corresponding to labels; for details we point to \cite[Ch. 2]{Eis05} or \cite[Ch. 4]{MS05}. Note that $F_{-1} = S$, labeled by the monomial $1 \in S$ corresponding to the empty set, is the unique $(-1)$-dimensional face of $\Delta_\P$.

 \begin{thm} \label{thm:resolution}
 Let $\P \subseteq 2^{[n]}$ be an intersection-closed poset. Then $\mathcal{F}(\Delta_\P)$ is acyclic and hence gives an $S$-free resolution of $S/I_{\C(\P)}^\star$.
 \end{thm}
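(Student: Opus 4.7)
The plan is to invoke the Bayer--Sturmfels criterion for cellular resolutions (\cite[Proposition 4.5]{MS05}): $\mathcal F(\Delta_\P)$ resolves $S/I^\star_{\C(\P)}$ if and only if, for every monomial $m \in S$, the subcomplex $(\Delta_\P)_{\preceq m}$ consisting of faces whose labels divide $m$ has $\widetilde H_i((\Delta_\P)_{\preceq m};\mathbbm k) = 0$ for all $i \geq 0$. The whole proof reduces to verifying this topological condition, and the heart of the matter is to describe $(\Delta_\P)_{\preceq m}$ combinatorially via the dictionary of \Cref{defn:dictionary}.

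First I would reduce to a clean family of monomials. Since every vertex label $m(A,A)$ is squarefree, $(\Delta_\P)_{\preceq m}$ depends only on the squarefree support of $m$. If some $i \in [n]$ has neither $x_{(i,0)}$ nor $x_{(i,1)}$ dividing $m$, then no vertex label divides $m$, so $(\Delta_\P)_{\preceq m}$ consists only of the empty face, which has vanishing reduced homology in degrees $\geq 0$; consistently, $m \notin I^\star_{\C(\P)}$ in this case. Otherwise, by \Cref{lem:dictionary}(4), $m$ has the same squarefree support as $m(C,D)$ for the uniquely determined pair $C := \{i : x_{(i,1)} \nmid m\} \subseteq D := \{i : x_{(i,0)} \mid m\}$, so it suffices to verify the criterion for the monomials $m(C,D)$ with $C \subseteq D \subseteq [n]$.

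Next I would use the dictionary to identify these subcomplexes. A vertex $A \in \P$ satisfies $m(A,A) \mid m(C,D)$ if and only if $C \subseteq A \subseteq D$, by \Cref{rem:dictionaryremark}(2). More generally, a chain $A_0 < \cdots < A_i$ in $\P$ carries the label $\operatorname{lcm}(m(A_0,A_0), \ldots, m(A_i,A_i)) = m(A_0, A_i)$ (using \Cref{lem:dictionary}(1) and the fact that along a chain the intersection and union collapse to the endpoints), which divides $m(C,D)$ if and only if $C \subseteq A_0$ and $A_i \subseteq D$, equivalently the entire chain lies in the subposet $\P_{[C,D]} := \{A \in \P : C \subseteq A \subseteq D\}$. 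Hence $(\Delta_\P)_{\preceq m(C,D)}$ is precisely the order complex $\Delta_{\P_{[C,D]}}$.

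The last step, and the only one that genuinely invokes the hypothesis, is to show via intersection-closedness that $\Delta_{\P_{[C,D]}}$ is contractible whenever $\P_{[C,D]}$ is nonempty. Pick any $A_0 \in \P_{[C,D]}$: then $A_0 \in V(C)$, so $\overline C = \bigcap_{B \in V(C)} B$ is a finite intersection of elements of $\P$ and thus lies in $\P$. Moreover $C \subseteq \overline C \subseteq A_0 \subseteq D$ gives $\overline C \in \P_{[C,D]}$, and by construction $\overline C \subseteq A$ for every $A \in \P_{[C,D]}$. Thus $\overline C$ is a minimum of $\P_{[C,D]}$, making $\Delta_{\P_{[C,D]}}$ a cone with apex $\overline C$ and hence contractible. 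I expect this final step to be the crux: producing a minimum element via $\overline C$ is the sole point at which intersection-closedness is used, and the cone argument collapses without it.
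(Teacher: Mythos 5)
Your proof is correct and follows essentially the same route as the paper's: invoke the Bayer--Sturmfels acyclicity criterion, translate $(\Delta_\P)_{\preceq m}$ via the dictionary of \Cref{defn:dictionary} into the order complex of the interval-restricted poset $\P_{[C,D]}$, and use intersection-closedness to produce $\overline C$ as a minimum element making that complex a cone. If anything your case analysis is slightly cleaner than the paper's, which splits on $V(A)=\emptyset$ versus $V(A)\neq\emptyset$ but does not explicitly note that $V(A)\neq\emptyset$ alone does not guarantee a vertex in $(\Delta_\P)_{\preceq m(A,B)}$ (one also needs $\overline A \subseteq B$, which your ``pick $A_0 \in \P_{[C,D]}$ first'' argument handles directly).
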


\begin{proof}
First, note that by \Cref{lem:dictionary}(3), the monomials appearing as a label of a face in $\mathcal F(\Delta_\P)$ are all of the form $m(A,B)$ for some $A\leq B \in \P$.
For $\mathcal F(\Delta_\P)$ to be acyclic, by \cite[Proposition 4.5]{MS05} we need to show that for any monomial $m$, the subcomplex $(\Delta_\P)_{\preceq m}$ is acyclic, where $(\Delta_\P)_{\preceq m}$ consists of all faces of $\Delta_\P$ labeled by monomials $\preceq m$.

Since all labels of $\Delta_\P$ are squarefree, it suffices to consider squarefree monomials $m$.
If $m$ is squarefree but not of the form $m(A,B)$ for some $A\subseteq B \subseteq [n]$, then by \Cref{lem:dictionary}(4) then there exists $i\in [n]$ such that neither $x_{(i,0)}$ nor $x_{(i,1)}$ divides $m$.
In particular, no monomial of the form $m(A,B)$ can divide $m$, and so in this case $(\Delta_\P)_{\preceq m} = \emptyset$ is the null complex, hence is acyclic.

We are thus left with the case where $m = m(A,B)$ for some $A, B \subseteq [n]$.
\Cref{lem:dictionary}(1) then implies that $(\Delta_\P)_{\preceq m}$ consists of chains $\{C_0 < \cdots < C_i\}$ with $A\subseteq C_0$ and $C_i \subseteq B$.
If $V(A) = \emptyset$, then $(\Delta_\P)_{\preceq m} = \emptyset$ is null. 
Thus we may assume $V(A) \ne \emptyset$, so since $\P$ is intersection-closed, $\overline{A} \in \P$. 
Observe that every vertex of $(\Delta_\P)_{\preceq m}$ is connected to the vertex $\{\overline{A}\} \in (\Delta_\P)_{\preceq m}$.
In particular, $(\Delta_\P)_{\preceq m}$ is a cone over the subcomplex of $\Delta_\P$ consisting of chains in $\P$ starting strictly above $\overline{A}$ and ending below $B$.

Putting the above reasoning together shows that $(\Delta_\P)_{\preceq m}$ is acyclic for any monomial $m$, and thus $\mathcal F(\Delta_\P)$ is a free resolution. The image of the last map $\partial_0$ in $\mathcal F(\Delta_\P)$ is the ideal in $S$ generated by the monomials $m(A,A)$ for $A\in \P$, which by \Cref{lem:dictionary}(2) is exactly $I_{\C(\P)}^\star$. 
\end{proof}

Although the resolution $\mathcal{F}(\Delta_\P)$ is in general non-minimal, we can still describe all the multigraded Betti numbers of $I_{\C(\P)}^\star$. 
The attentive reader may note that the multigraded Betti numbers of $I^\star_{\C(\P)}$ can be expressed as homologies of links of the suboplex $\Diamond_{\C(\P)}$ by Hochster's formula \cite[Corollary 1.40]{MS05}.
In view of this, the main point of the following theorem is that the Betti numbers of $I^\star_{\C(\P)}$ are expressed directly as homologies of open intervals in $\Delta_\P$, rather than links in $\Diamond_{\C(\P)}$.

\begin{thm}\label{thm:Betti1}
Let $\P \subseteq 2^{[n]}$ be an intersection-closed poset. Then the nonzero multigraded Betti numbers of $I^\star_{\C(\P)}$ can only occur in degrees $m(A,B)$ for some $A\leq B \in \P$, and
\[
\beta_{i,m(A,B)}\left(I_{\C(\P)}^\star\right) = \dim_{\mathbbm k} \widetilde{H}_{i-2}(\overline{\Delta}_{[A,B]};\mathbbm k) \quad \forall i\ge 1,
\quad \textup{ and } \beta_{0,m(A,A)}\left(I^\star_{\C(\P)}\right) = 1.
\]
\end{thm}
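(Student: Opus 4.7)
The plan is to extract Betti numbers directly from the cellular free resolution $\mathcal{F}(\Delta_\P)$ established in \Cref{thm:resolution}. Since the equality $\beta_{i,m}(S/I) = \dim_{\mathbbm k} H_i(\mathcal{F}(\Delta_\P) \otimes_S \mathbbm{k})_m$ holds for any free resolution, even a non-minimal one, it suffices to analyze this complex of $\mathbbm{k}$-vector spaces in each multidegree.

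By \Cref{lem:dictionary}(1), the label of a chain $\{C_0 < \cdots < C_i\} \in \Delta_\P$ is $\operatorname{lcm}(m(C_j, C_j)) = m(C_0, C_i)$. Thus the labels of faces of $\Delta_\P$ are exactly the monomials $m(A, B)$ with $A \leq B$ in $\P$, which immediately proves that Betti numbers must vanish outside these degrees. I would then analyze the reduced differential. After tensoring the cellular boundary $\partial e_\sigma = \sum_{\tau \lessdot \sigma} \pm (m_\sigma / m_\tau) e_\tau$ with $\mathbbm{k}$, only summands with $m_\tau = m_\sigma$ survive. For $\sigma = \{A = C_0 < \cdots < C_i = B\}$, removing an endpoint $C_0$ or $C_i$ strictly shrinks the label, while removing any interior $C_j$ preserves it. Therefore the bijection $\{A = C_0 < \cdots < C_i = B\} \leftrightarrow \{C_1 < \cdots < C_{i-1}\}$ identifies the $m(A, B)$-graded piece of $\mathcal{F}(\Delta_\P) \otimes_S \mathbbm{k}$, up to a homological degree shift coming from deleting both endpoints of the chain, with the reduced chain complex of $\Delta_{(A, B)} = \overline{\Delta}_{[A, B]}$.

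Taking homology, tracking the degree shift, and applying the standard reindexing $\beta_{i, m}(I^\star) = \beta_{i+1, m}(S/I^\star)$ yields the formula $\beta_{i, m(A, B)}(I^\star_{\C(\P)}) = \dim_{\mathbbm k} \widetilde{H}_{i - 2}(\overline{\Delta}_{[A, B]}; \mathbbm{k})$ for $i \geq 1$. The sole remaining case is $A = B$, where the only face with label $m(A, A)$ is the vertex $\{A\}$ itself, producing $\beta_{0, m(A, A)} = 1$. The main care point --- and the most likely source of off-by-one errors --- is the bookkeeping of these homological degree shifts, especially at low rank, where one must distinguish the empty complex $\{\emptyset\}$ (with $\widetilde{H}_{-1} = \mathbbm{k}$) from the null complex $\emptyset$ (with all reduced homology zero).
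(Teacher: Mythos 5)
Your plan is correct but takes a more hands-on route than the paper. The paper invokes the general identity $\beta_{i,m}(I^\star_{\C(\P)}) = \dim_{\mathbbm k}\widetilde H_{i-1}((\Delta_\P)_{\prec m};\mathbbm k)$ from \cite[Theorem 4.7]{MS05} and then applies \Cref{rem:truncatedOrderComplex} to recognize $(\Delta_\P)_{\prec m(A,B)}$ as the suspension of $\overline\Delta_{[A,B]}$, reducing $\widetilde H_{i-1}$ of the suspension to $\widetilde H_{i-2}$ of the truncated order complex. You instead compute $\operatorname{Tor}$ directly by tensoring the non-minimal cellular resolution with $\mathbbm k$: in multidegree $m(A,B)$ the surviving faces are exactly chains with endpoints $A$ and $B$ (using injectivity of $(A,B)\mapsto m(A,B)$, which follows from \Cref{lem:dictionary}(1)), only boundary terms deleting an interior vertex survive, and stripping the two fixed endpoints identifies that multidegree strand with the reduced chain complex of $\Delta_{(A,B)} = \overline\Delta_{[A,B]}$. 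This effectively inlines the proof of the cited theorem and makes the $\pm 1$ differential coefficients visible, at the cost of more delicate index bookkeeping; the paper's route is shorter and its suspension picture is reused in the following corollary on first syzygies. The shifts do work out as you indicate: homological position $k$ of the resolution of $S/I^\star_{\C(\P)}$ holds the $(k-1)$-faces, whose endpoint-stripped images are $(k-3)$-chains of $(A,B)$, giving $\beta_{k,m(A,B)}(S/I^\star_{\C(\P)}) = \dim_{\mathbbm k}\widetilde H_{k-3}(\overline\Delta_{[A,B]};\mathbbm k)$, and then $\beta_{i,m}(I^\star_{\C(\P)}) = \beta_{i+1,m}(S/I^\star_{\C(\P)})$ yields $\widetilde H_{i-2}$ as in the statement; your care about the empty complex $\{\emptyset\}$ versus the null complex $\emptyset$ at low rank is exactly the right thing to watch.
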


\begin{proof}
For $i = 0$, and any monomial $m$, we have $\beta_{0,m}(I_{\C(\P)}^\star) = 1$ iff $m = m(A, A)$ for some $A \in \P$ and $\beta_{0,m}(I_{\C(\P)}^\star) = 0$ otherwise, since $\{m(A,A)\}_{A\in \mathcal P}$ is the unique set of minimal monomial generators of $I_{\C(\P)}^\star$.  Now assume $i\geq 1$.  As in \Cref{thm:resolution}, the monomials appearing as a label of a face in $\mathcal F(\Delta_\P)$ are exactly $\{m(A,B) \mid A\leq B \in \P\}$, and since $\mathcal F(\Delta_\P)$ resolves $S/I^\star_{\C(\P)}$, the multigraded Betti numbers can only occur in such degrees.
By \cite[Theorem 4.7]{MS05}, \[
\beta_{i,{m(A,B)}}(I^\star_{\C(\P)}) = \dim_{\mathbbm k} \widetilde{H}_{i-1}( (\Delta_\P)_{\prec m(A,B)}; \mathbbm k),
\]
where $(\Delta_\P)_{\prec m(A,B)}$ is the complex whose faces are chains $\{C_0 < \ldots < C_i\}$ with $A\leq C_0$, $C_i \leq B$, and either $A \ne C_0$ or $C_i \ne B$.
By \Cref{rem:truncatedOrderComplex}, $(\Delta_\P)_{\prec m(A,B)}$ is precisely the suspension of the truncated order complex $\overline{\Delta}_{[A,B]}$, so $\widetilde{H}_{i-1}((\Delta_\P)_{\prec m(A,B)}; \mathbbm k) \cong \widetilde{H}_{i-2}(\overline{\Delta}_{[A,B]}; \mathbbm k)$.
\end{proof}

\begin{cor}
Let $\P \subseteq 2^{[n]}$ be an intersection-closed poset. Then minimal generators of first syzygies of $I^\star_{\C(\P)}$ are in bijection with cover relations in $\P$.
\end{cor}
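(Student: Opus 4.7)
The plan is to read off $\beta_1$ from \Cref{thm:Betti1} and observe that the combinatorial condition isolating multidegrees with $\beta_{1,m(A,B)} \neq 0$ is exactly that $A \lessdot B$ is a cover relation in $\P$.

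First, I would apply \Cref{thm:Betti1} with $i = 1$: for every $A \le B \in \P$,
\[
\beta_{1,m(A,B)}\bigl(I^\star_{\C(\P)}\bigr) \;=\; \dim_{\mathbbm k}\widetilde{H}_{-1}\bigl(\overline{\Delta}_{[A,B]};\mathbbm k\bigr),
\]
and $\beta_{1,m} = 0$ for any monomial $m$ not of the form $m(A,B)$. Next I would invoke the key fact recorded in \Cref{rem:truncatedOrderComplex}: the reduced $(-1)$-homology of a simplicial complex is nonzero (and then one-dimensional) precisely when the complex equals the empty complex $\{\emptyset\}$. Consulting \Cref{defn:trunc}, this happens for $\overline{\Delta}_{[A,B]}$ exactly when $\rank([A,B]) = 1$, i.e.\ when $A < B$ and there is no $C \in \P$ with $A < C < B$ --- which is the definition of $B$ covering $A$. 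When $A = B$ the interval has rank $0$ and $\overline{\Delta}_{[A,A]}$ is the null complex, contributing nothing.

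Putting these together, the set of multidegrees $m(A,B)$ for which $\beta_{1,m(A,B)}(I^\star_{\C(\P)}) \neq 0$ is exactly the set of cover relations $A \lessdot B$ in $\P$, and at each such multidegree the Betti number equals $1$. Since multigraded Betti numbers count minimal generators of the syzygy module in each multidegree, this yields the desired bijection between minimal generators of the first syzygy module of $I^\star_{\C(\P)}$ and cover relations in $\P$. I do not anticipate any real obstacle: the argument is essentially a dimension bookkeeping once \Cref{thm:Betti1} and the characterization of $\overline{\Delta}_{[A,B]}$ in rank $\le 1$ are in hand; the only subtlety worth spelling out is to note explicitly that multidegrees of the form $m(A,A)$ contribute to $\beta_0$ but not to $\beta_1$, and that multidegrees not of the form $m(A,B)$ contribute to no Betti number at all.
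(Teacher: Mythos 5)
Your proof is correct and follows exactly the same route as the paper's: apply \Cref{thm:Betti1} at $i=1$, use \Cref{rem:truncatedOrderComplex} to characterize when $\widetilde{H}_{-1}(\overline{\Delta}_{[A,B]};\mathbbm k)$ is nonzero, and identify the resulting condition $\rank([A,B]) = 1$ with $B$ covering $A$. The extra bookkeeping you note (excluding $A = B$ and multidegrees not of the form $m(A,B)$) is implicit in the paper's one-line argument but is accurate.
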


\begin{proof}
Taking $i = 1$ in \Cref{thm:Betti1}, one has $\beta_{1,m(A,B)}(I^\star_{\C(\P)}) = \dim_{\mathbbm k} \widetilde{H}_{-1}(\overline{\Delta}_{[A,B]}; \mathbbm k)$ is nonzero $\iff \overline{\Delta}_{[A,B]} = \{ \emptyset \}$, by \Cref{rem:truncatedOrderComplex} $\iff \rank([A,B]) = 1 \iff B$ covers $A$.
\end{proof}

\begin{cor}\label{cor:pdim<rank}
Let $\P \subseteq 2^{[n]}$ be an intersection-closed poset. Then there is an inequality
\[\hdim \C(\P) \le \rank(\P),\]
and equality holds if and only if $\widetilde{H}_{rank(\P)-2} (\overline{\Delta}_{[\hat 0, B]};\mathbbm k)\ne 0$ for some maximal $B\in \P$.
\end{cor}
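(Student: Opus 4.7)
The plan is to deduce both the inequality and the equality characterization directly from the Betti number formula in \Cref{thm:Betti1}. By definition, $\hdim \C(\P) = \pdim I^\star_{\C(\P)}$ is the largest $i$ for which some multigraded Betti number $\beta_{i,m(A,B)}(I^\star_{\C(\P)})$ is nonzero. For $i \ge 1$, \Cref{thm:Betti1} identifies this Betti number with $\dim_{\mathbbm k}\widetilde{H}_{i-2}(\overline{\Delta}_{[A,B]};\mathbbm k)$. Since $\overline{\Delta}_{[A,B]}$ is a simplicial complex of dimension at most $\rank([A,B])-2$ (interpreted via \Cref{defn:trunc} when $\rank([A,B]) \in \{0,1\}$, using \Cref{rem:truncatedOrderComplex}), the reduced homology vanishes in degrees above $\rank([A,B])-2$. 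Hence $\beta_{i,m(A,B)} \ne 0$ forces $i \le \rank([A,B]) \le \rank(\P)$, giving $\hdim \C(\P) \le \rank(\P)$.

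For the equality characterization, suppose first that $\hdim \C(\P) = \rank(\P)$. Then there exist $A \le B \in \P$ with $\widetilde{H}_{\rank(\P)-2}(\overline{\Delta}_{[A,B]};\mathbbm k) \ne 0$, which by the above bound forces $\rank([A,B]) = \rank(\P)$. A chain in $[A,B]$ realizing this rank is a chain of maximum length in $\P$, so it admits no strict extension above $B$ or below $A$; equivalently, $A$ is minimal and $B$ is maximal in $\P$. Here the intersection-closed hypothesis becomes essential: since $\P$ is nonempty and closed under intersections, $\hat 0 := \bigcap_{C \in \P} C$ is the unique minimum of $\P$, and any minimal element must equal $\hat 0$. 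Thus $A = \hat 0$, establishing the forward direction.

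The converse direction is immediate: if $\widetilde{H}_{\rank(\P)-2}(\overline{\Delta}_{[\hat 0, B]};\mathbbm k) \ne 0$ for some maximal $B\in\P$, then \Cref{thm:Betti1} yields $\beta_{\rank(\P),m(\hat 0,B)}(I^\star_{\C(\P)}) \ne 0$, so $\hdim \C(\P) \ge \rank(\P)$, which combined with the inequality already proved gives equality.

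The only real subtlety is in the forward direction of the equality statement, where one must rule out the possibility that the extremal $A$ is merely minimal in $\P$ without coinciding with the global minimum; this is precisely where intersection-closedness is used to collapse ``minimal'' to ``minimum'' via the closure formula $\hat 0 = \bigcap_{C\in\P} C$.
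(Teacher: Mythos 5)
Your proposal is correct and matches the paper's argument in substance. The only cosmetic difference is in the inequality: the paper reads off $\hdim \C(\P) \le \rank(\P)$ directly from the length of the cellular resolution $\mathcal{F}(\Delta_\P)$ (Theorem~\ref{thm:resolution}), while you recover it from the Betti number formula (Theorem~\ref{thm:Betti1}) via the dimension bound $\dim \overline{\Delta}_{[A,B]} = \rank([A,B]) - 2$; these are two phrasings of the same fact. For the equality characterization you spell out, more carefully than the paper does, why a nonvanishing Betti number in top homological degree forces $\rank([A,B]) = \rank(\P)$, hence $A$ minimal and $B$ maximal, and then uses intersection-closedness to conclude $A = \hat 0$ --- this is exactly the reasoning the paper compresses into its final sentence.
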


\begin{proof}
The inequality follows from \Cref{thm:resolution}, as the order complex $\Delta_\P$ has dimension $\rank(\P)$, so $\mathcal{F}(\Delta_\P)$ is a free resolution of $S/I^\star_{\C(\P)}$ of length $\rank(\P)+1$. 
Equality is achieved precisely when $\beta_{r,m}(I^\star_{\C(\P)}) = \widetilde{H}_{r-2} (\overline{\Delta}_{[A, B]};\mathbbm k) \neq 0$ for some monomial $m = m(A,B)$ with $r = \rank([A,B]) = \rank(\P)$, which occurs only if $B$ is maximal in $\P$ and $A = \hat 0$. \end{proof}

\section{Interval Cohen-Macaulay posets} \label{section:locCM}

Although the free resolution of $I^\star_{\C(\P)}$ for an intersection-closed poset $\P$ given in \Cref{thm:resolution} is satisfactory from an algebraic viewpoint, it is natural to ask if the Betti numbers in \Cref{thm:Betti1} have some combinatorial meaning. 
Our goal in this section is to show that for certain combinatorial families of intersection-closed posets, the multigraded Betti numbers of $I^\star_{\C(\P)}$ are given by the M\"obius function of the poset.

\begin{defn}
Let $\P$ be a poset. The \emph{M\"obius function} $\mu_\P: \P \times \P \to \ZZ$ is recursively defined by
\[
\mu_\P(A,A) := 1 \textnormal{ for any $A\in \P$ \quad and}
\]
\[\mu_\P(A,B) := - \sum_{A\leq C < B} \mu_\P(A,C) \textnormal{ for any $A\leq B \in \P$}.
\]
\end{defn}
We often drop the subscript $\P$ when the poset is clear from the context, and when $\P$ is bounded we write $\mu(\P) := \mu_\P(\hat 0, \hat 1)$.  The following statement is well-known in the literature as the Philip Hall theorem; for a proof, see e.g. \cite[Proposition 3.6]{Rot64}.

\begin{prop}\label{prop:hall}
Let $\P$ be a bounded poset with $\hat 0 \neq \hat 1$.  Then the reduced Euler characteristic of the truncated order complex $\widetilde{\chi}(\overline{\Delta}_\P)$ is given by the Mobius function, i.e. 
\[
\widetilde\chi(\overline{\Delta}_{\P}) := \sum_{i \geq -1} (-1)^i \dim_{\mathbbm k} \widetilde{H}_i (\overline{\Delta}_\P; \mathbbm k) = \mu(\P).
\]
\end{prop}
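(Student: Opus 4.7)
The plan is to reduce the statement to the classical Philip Hall formula, which expresses the M\"obius function $\mu(A,B)$ as an alternating sum over chain counts, and then combine this with the Euler--Poincar\'e formula $\widetilde\chi(\overline{\Delta}_\P) = \sum_{j\geq -1} (-1)^j f_j(\overline{\Delta}_\P)$. Specifically, I plan to prove first that for all $A \leq B$ in any finite poset,
\[
\mu_\P(A,B) \;=\; \sum_{i\geq 0} (-1)^i c_i(A,B),
\]
where $c_i(A,B)$ denotes the number of strictly increasing chains $A = C_0 < C_1 < \cdots < C_i = B$. Once this is in hand, identifying the $j$-faces of $\overline{\Delta}_\P$ with the chains from $\hat 0$ to $\hat 1$ of length $j+2$ will finish the proof.

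For the Philip Hall formula I would induct on the rank of $[A,B]$. The base case $A=B$ is immediate: $c_0(A,A)=1$, $c_i(A,A)=0$ for $i\geq 1$, which matches $\mu(A,A)=1$. For the inductive step with $A<B$, the key combinatorial observation is that any chain of length $i \geq 1$ from $A$ to $B$ factors uniquely as a chain of length $i-1$ from $A$ to some $C$ with $A\leq C < B$, followed by the step $C < B$. This gives the recursion $c_i(A,B) = \sum_{A\leq C < B} c_{i-1}(A,C)$, and substituting yields
\[
\sum_{i\geq 0} (-1)^i c_i(A,B) \;=\; -\sum_{A\leq C < B}\sum_{i\geq 0} (-1)^i c_i(A,C) \;=\; -\sum_{A\leq C < B} \mu_\P(A,C) \;=\; \mu_\P(A,B),
\]
where the middle equality uses the inductive hypothesis and the last equality is the defining recursion of $\mu_\P$.

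For the final assembly, I note that under the convention $f_{-1}(\overline{\Delta}_\P) = 1$ (the empty face, present whenever $\overline{\Delta}_\P$ is non-null, i.e.\ whenever $\hat 0 \neq \hat 1$), each $j$-face of $\overline{\Delta}_\P$ is a chain of length $j$ in the open interval $(\hat 0, \hat 1)$; prepending $\hat 0$ and appending $\hat 1$ gives a bijection onto chains from $\hat 0$ to $\hat 1$ of length $j+2 \geq 1$. Since $\hat 0 \neq \hat 1$ there are no length-$0$ chains from $\hat 0$ to $\hat 1$, so combining with the Euler--Poincar\'e formula and Philip Hall,
\[
\widetilde\chi(\overline{\Delta}_\P) \;=\; \sum_{j\geq -1} (-1)^j f_j(\overline{\Delta}_\P) \;=\; \sum_{i\geq 1} (-1)^{i-2} c_i(\hat 0, \hat 1) \;=\; \sum_{i\geq 0} (-1)^i c_i(\hat 0, \hat 1) \;=\; \mu(\P).
\]
There is no real obstacle here beyond careful bookkeeping with indices and making sure the degenerate case $\rank(\P)=1$ (where $\overline{\Delta}_\P = \{\emptyset\}$, $\widetilde\chi=-1$, and the unique chain $\hat 0 < \hat 1$ indeed gives $\mu(\P)=-1$) is handled correctly by the $f_{-1}=1$ convention.
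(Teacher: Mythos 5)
Your proof is correct. The paper does not actually prove this proposition --- it is stated as the classical Philip Hall theorem and the proof is deferred to \cite[Proposition 3.6]{Rot64} --- so the comparison is really with Rota's argument, and what you have written is exactly the standard proof he gives, just phrased via direct induction rather than through the incidence algebra (where one writes $\mu = \zeta^{-1} = \sum_k (-1)^k(\zeta-\delta)^k$ and recognizes $(\zeta-\delta)^k(A,B)$ as your chain count $c_k(A,B)$). Your handling of the bookkeeping is sound: the base case of the induction, the use of $c_0(A,B)=0$ for $A<B$ to start the sum at $i\geq 0$ without harm, the bijection between $j$-faces of $\overline{\Delta}_\P$ and length-$(j+2)$ chains from $\hat 0$ to $\hat 1$ (with the $f_{-1}=1$ convention covering the empty face and the $\rank(\P)=1$ degenerate case), and the appeal to the Euler--Poincar\'e identity to pass from the homological definition of $\widetilde\chi$ to face counts are all correct.
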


In light of \Cref{thm:Betti1}, these reduced Euler characteristics are alternating sums of multigraded Betti numbers. 
We now introduce a new variant of the Cohen-Macaulay property for posets, for which the sums in the Euler characteristics simplify to a single term.

\begin{defn}
A simplicial complex $\Delta$ is \emph{Cohen-Macaulay} (over $\mathbbm k$) if the Stanley-Reisner ideal $I_\Delta$ is Cohen-Macaulay, i.e. the quotient ring $S/I_\Delta$ is a Cohen-Macaulay ring. A poset $\P$ is \emph{Cohen-Macaulay} if the order complex $\Delta_\P$ is Cohen-Macaulay. We define a poset $\P$ to be \emph{interval Cohen-Macaulay} if every open interval in $\P$ is Cohen-Macaulay.
\end{defn}

In general, the Cohen-Macaulay property depends on the characteristic of $\mathbbm k$ (see \cite[Figure 18]{bjorner1982introduction} for an example). Next, we clarify the interval Cohen-Macaulay property.
A celebrated result of Reisner gives a characterization for a simplicial complex to be Cohen-Macaulay, in terms of homology of links. Recall that for a simplicial complex $\Delta$, the \emph{link} of a face $F$ in $\Delta$ is the subcomplex $\operatorname{lk}_\Delta F:= \{G\in \Delta \mid F\cap G = \emptyset,\ F\cup G \in \Delta \}$.

\begin{prop}\label{lemma:intervalCM} Let $\Delta$ be a simplicial complex, and $\P$ a poset.
\begin{enumerate}
    \item \cite[Theorem 1]{Rei76} $\Delta$ is Cohen-Macaulay if and only if $\widetilde{H}_i(\operatorname{lk}_\Delta F; \mathbbm k) = 0$ for all faces $F\in \Delta$ and all $i<\dim \operatorname{lk}_\Delta F$. (Note: this is stronger than saying $\widetilde{H}_i(\Delta; \mathbbm k) = 0$ for $i < \dim \Delta$.)
    \item If $\P$ is bounded with $\hat 0 \ne \hat 1$, then $\P = [\hat 0, \hat 1]$ is Cohen-Macaulay iff any of $(\hat 0, \hat 1), (\hat 0, \hat 1], [\hat 0, \hat 1)$ is Cohen-Macaulay.
    \item If $\P$ is Cohen-Macaulay, then $\P$ is interval Cohen-Macaulay.
\end{enumerate}
\end{prop}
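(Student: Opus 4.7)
The plan is to leverage two standard facts from Stanley-Reisner theory: coning preserves Cohen-Macaulayness, and links of faces in a Cohen-Macaulay complex are themselves Cohen-Macaulay. Part (1) is a direct quotation of Reisner's theorem, so the substantive work lies in (2) and (3). Both parts will follow by translating the poset operations of removing $\hat 0$ or $\hat 1$ (respectively, restricting to an interval) into the simplicial operations of de-coning (respectively, taking a link).

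For part (2), the key observation is that $\hat 0 \in \P$ is comparable to every element of $\P$, so $\hat 0$ sits in every maximal chain and appears as a cone point of $\Delta_{[\hat 0, \hat 1]}$; that is,
\[
\Delta_{[\hat 0, \hat 1]} \;=\; \hat 0 \;*\; \Delta_{(\hat 0, \hat 1]}.
\]
In the Stanley-Reisner presentation, $\hat 0$ does not appear in any minimal nonface (since every face contains $\hat 0$), so $S/I_{\Delta_{[\hat 0, \hat 1]}}$ is the polynomial extension of $S'/I_{\Delta_{(\hat 0, \hat 1]}}$ by the free variable $x_{\hat 0}$, and polynomial extension preserves the Cohen-Macaulay property. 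Running the same argument with $\hat 1$ in place of $\hat 0$ and iterating yields the four-way chain of equivalences in (2).

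For part (3), I would apply Reisner's criterion from (1) iteratively — the link of a face in a link is again a link — to conclude that every link in a Cohen-Macaulay complex is Cohen-Macaulay. Given $A < B$ in $\P$, consider the 1-face $\{A, B\} \in \Delta_\P$: any chain in $\P$ that together with $\{A < B\}$ forms a larger chain must consist of elements comparable with both $A$ and $B$, which split into three totally ordered zones, namely $\P_{<A} := \{C \in \P \mid C < A\}$, $(A, B)$, and $\P_{>B} := \{C \in \P \mid B < C\}$. Hence
\[
\operatorname{lk}_{\Delta_\P}\!\bigl(\{A, B\}\bigr) \;=\; \Delta_{\P_{<A}} \,*\, \Delta_{(A, B)} \,*\, \Delta_{\P_{>B}}.
\]
Since the left side is Cohen-Macaulay and a simplicial join $X * Y$ is Cohen-Macaulay over $\mathbbm k$ if and only if each of $X$ and $Y$ is (a standard consequence of the K\"unneth formula for reduced homology of joins, combined with Reisner), it follows that $\Delta_{(A, B)}$ is Cohen-Macaulay. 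The main obstacle I anticipate is the boundary cases: when $A$ is minimal in $\P$ (so $\P_{<A}$ is empty) or $B$ is maximal (so $\P_{>B}$ is empty), one factor of the join degenerates to the void complex and the identity above must be reinterpreted with fewer factors. This is essentially a convention check, which I would settle by replacing the edge $\{A, B\}$ with the subface $\{A\}$ or $\{B\}$ as appropriate and repeating the join analysis, so that $\Delta_{(A,B)}$ still appears as a join factor of a Cohen-Macaulay link.
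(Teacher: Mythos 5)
Your proof is correct in substance but takes a genuinely different route for part (3), and the two approaches are worth comparing.

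For part (2) you and the paper do essentially the same thing: observe that $\hat 0$ (resp.\ $\hat 1$) is a cone point of the order complex, and that coning preserves the Cohen-Macaulay property. You phrase this via the polynomial-extension description of the Stanley-Reisner ring; the paper instead applies Reisner's criterion (1) directly to the cone. Either phrasing is fine.

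For part (3), the paper's proof is more economical: it takes a maximal chain $M$ through $A$ and $B$, deletes the interior, and lets $C := M \setminus (A,B)$. A short maximality argument shows that $\operatorname{lk}_{\Delta_\P}(C) = \Delta_{(A,B)}$ on the nose — no elements below $A$ or above $B$ can appear in the link because $C$ already contains a maximal chain in $\P_{\le A}$ and in $\P_{\ge B}$. Then ``links of faces in a Cohen-Macaulay complex are Cohen-Macaulay'' (an immediate consequence of (1), since a link of a face in a link is again a link in $\Delta_\P$) finishes the job. Your route instead links the edge $\{A,B\}$, obtaining the join
$\Delta_{\P_{<A}} * \Delta_{(A,B)} * \Delta_{\P_{>B}}$,
and then invokes the additional (standard, but genuinely nontrivial) lemma that over a field a join is Cohen-Macaulay iff each factor is. Both are valid; the paper's choice of a larger face to link avoids needing the join lemma altogether.

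Two smaller remarks. First, your phrase ``three totally ordered zones'' is imprecise: $\P_{<A}$, $(A,B)$, $\P_{>B}$ need not be totally ordered \emph{within themselves}; what is true (and what the join decomposition uses) is that any element of an earlier zone is comparable to, and below, any element of a later zone. Second, the degenerate-case worry at the end is moot: when $\P_{<A} = \emptyset$, its order complex is the empty complex $\{\emptyset\}$ in the paper's notation, which is the identity for simplicial join, so your formula already holds verbatim — no replacement by $\{A\}$ or $\{B\}$ is needed, and in fact linking $\{A\}$ alone would give $\Delta_{\P_{<A}} * \Delta_{\P_{>A}}$, in which $\Delta_{(A,B)}$ is not directly a join factor.
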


\begin{proof}
(2) Note that $[\hat 0, \hat 1)$ is a cone over $(\hat 0, \hat 1)$ (and similarly $[\hat 0, \hat 1]$ is a cone over $[\hat 0, \hat 1)$), so any subcomplex of $[\hat 0, \hat 1)$ is either a subcomplex of $(\hat 0, \hat 1)$ or a cone over a subcomplex of $(\hat 0, \hat 1)$. Applying (1) then gives statement (2).

(3) For $A< B \in \P$, note that $\Delta_{(A,B)} = \operatorname{lk}_{\Delta_\P} C$, where $C$ is any chain in $\P$ obtained by omitting all elements strictly between $A$ and $B$ from a maximal chain in $\P$ containing $A$ and $B$.  Consequently, the links of $\Delta_{(A,B)}$ are links $\operatorname{lk}_{\Delta_\P}F$ of $\Delta_\P$ where $F$ is a chain containing $C$.
The statement of (3) now follows from (1).
\end{proof}

It follows from \Cref{lemma:intervalCM}(2) that (i) in the definition of interval Cohen-Macaulay, one could have replaced ``open'' with ``closed'', and (ii) the converse of \Cref{lemma:intervalCM}(3) is true if $\P$ is bounded. However, the converse of \Cref{lemma:intervalCM}(3) does not hold in general, as shown by the poset (left) in \Cref{fig:1} with its order complex (right).

\begin{figure}[h]
    \centering
\begin{tabular}{p{3cm} p{6cm} p{6cm}}
&
\vspace{-12pt}
\begin{tikzpicture}[scale=0.7, vertices/.style={draw, fill=black, circle, inner sep=0pt}]
    \node [vertices, label=right:{}] (0) at (-0+0,0){};
    \node [vertices, label=left:{}] (1) at (-2.25+0,1.33333){};
    \node [vertices, label=right:{}] (2) at (-2.25+4.5,1.33333){};
    \node [vertices, label=left:{}] (3) at (-2.25+0,2.66667){};
    \node [vertices, label=right:{}] (4) at (-2.25+4.5,2.66667){};
    \foreach \to/\from in {0/1, 0/2, 1/3, 2/4}
    \draw [-] (\to)--(\from);
\end{tikzpicture}
&
\vspace{-4pt}
\begin{tikzpicture}[scale=0.7]
\draw[top color=gray,bottom color=gray] (0,-1) -- (1.73,0) -- (0,1) -- cycle;
\draw[top color=gray,bottom color=gray] (3.5,-1) -- (1.73,0) -- (3.5,1) -- cycle;
\draw[fill]
(0,1) circle [radius=0.1] 
(0,-1) circle [radius=0.1] 
(1.73, 0) circle [radius=0.1] 
(3.5, 1) circle [radius=0.1] 
(3.5, -1) circle [radius=0.1] 
;
\draw
(0,1) -- (0,-1) node [left] [midway] {}
(1.73, 0) -- (0,1) node [above] [midway] {}
(0,-1) -- (1.73, 0) node [below] [midway] {}
(3.5,1) -- (3.5,-1) node [right] [midway] {}
(1.73, 0) -- (3.5,1) node [above] [midway] {}
(3.5,-1) -- (1.73, 0) node [below] [midway] {}
;
\end{tikzpicture}
\end{tabular}
    \caption{A poset which is interval Cohen-Macaulay but not Cohen-Macaulay}
    \label{fig:1}
\end{figure}



\begin{thm}
\label{thm:Betti2}
Let $\P \subseteq 2^{[n]}$ be an intersection-closed poset that is interval Cohen-Macaulay.  Then the nonzero multigraded Betti numbers of $I^\star_{\C(\P)}$ occur only in degrees $m(A,B)$ for $A\leq B \in \P$, and
\[
\beta_{i,m(A,B)}(I^\star_{\C(\P)}) = 
\begin{cases} |\mu_\P(A,B)| & \textnormal{if }\rank([A,B]) = i\\
0 & \textnormal{otherwise}.
\end{cases}
\]
\end{thm}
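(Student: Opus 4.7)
The strategy is straightforward: translate the Betti numbers into poset-topological data via \Cref{thm:Betti1}, then use the interval Cohen-Macaulay hypothesis to collapse the homology to a single degree, and finally identify that remaining homology with the Möbius function via Philip Hall's theorem (\Cref{prop:hall}).

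More concretely, first I would invoke \Cref{thm:Betti1} to restrict attention to multidegrees of the form $m(A,B)$ with $A \leq B$ in $\P$, and for $i \geq 1$ reduce the computation to $\dim_{\mathbbm k} \widetilde H_{i-2}(\overline\Delta_{[A,B]};\mathbbm k)$. Next, using the interval Cohen-Macaulay assumption together with \Cref{lemma:intervalCM}(2), the truncated order complex $\overline\Delta_{[A,B]} = \Delta_{(A,B)}$ (for $\rank([A,B]) \geq 2$) is Cohen-Macaulay of dimension $\rank([A,B]) - 2$. By Reisner's criterion (\Cref{lemma:intervalCM}(1)), all reduced homology groups of a Cohen-Macaulay complex vanish below the top dimension. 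Thus $\widetilde H_{i-2}(\overline\Delta_{[A,B]};\mathbbm k) = 0$ unless $i = \rank([A,B])$, which already gives the vanishing half of the statement.

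It remains to compute the one potentially nonzero group, in degree $i = r := \rank([A,B])$. Since all lower reduced homologies vanish, the reduced Euler characteristic equals $(-1)^{r-2}\dim_{\mathbbm k}\widetilde H_{r-2}(\overline\Delta_{[A,B]};\mathbbm k)$. On the other hand, \Cref{prop:hall} identifies this Euler characteristic with $\mu_\P(A,B)$. Taking absolute values yields $\beta_{r,m(A,B)}(I^\star_{\C(\P)}) = |\mu_\P(A,B)|$, as desired.

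Finally, I would dispatch the edge cases: for $i=0$, \Cref{thm:Betti1} gives $\beta_{0,m(A,A)} = 1 = |\mu_\P(A,A)|$, matching $\rank([A,A])=0$; for $\rank([A,B]) = 1$ (i.e., $B$ covers $A$), the truncated order complex $\overline\Delta_{[A,B]}$ is the empty complex $\{\emptyset\}$ with $\widetilde H_{-1} \cong \mathbbm k$, giving $\beta_{1,m(A,B)} = 1 = |\mu_\P(A,B)|$. The one subtlety worth checking carefully is that the interval Cohen-Macaulay hypothesis was phrased in terms of open intervals, while \Cref{prop:hall} and Reisner's criterion are applied to $\overline\Delta_{[A,B]} = \Delta_{(A,B)}$; by \Cref{lemma:intervalCM}(2) these perspectives are equivalent, so no difficulty arises. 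The main (mild) obstacle is just ensuring the rank/dimension bookkeeping is consistent across all three cases $\rank([A,B]) = 0, 1, \geq 2$.
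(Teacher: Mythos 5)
Your proof is correct and follows essentially the same route as the paper's: apply \Cref{thm:Betti1} to express Betti numbers as reduced homology of $\overline{\Delta}_{[A,B]}$, use Reisner's criterion (\Cref{lemma:intervalCM}(1)) plus the interval Cohen-Macaulay hypothesis to concentrate homology in the top dimension $\rank([A,B]) - 2$, and then identify the surviving Euler characteristic with $\mu_\P(A,B)$ via Philip Hall's theorem (\Cref{prop:hall}). The only cosmetic difference is that you spell out the $i = 0$ and $\rank([A,B]) = 1$ edge cases explicitly (the paper folds the latter into the general case, since Reisner's criterion is vacuous when $\dim\overline{\Delta}_{[A,B]} = -1$), and your citation of \Cref{lemma:intervalCM}(2) is superfluous since the interval Cohen-Macaulay hypothesis is already phrased in terms of open intervals and $\overline{\Delta}_{[A,B]} = \Delta_{(A,B)}$ holds by convention.
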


\begin{proof}
Let $\P$ be interval Cohen-Macaulay, and $A \le B\in \P$. 
If $A = B$, then $\beta_{i,m(A,A)}(I^\star_{\C(\P)}) = 1 = \mu(A,A)$ iff $i = 0$, by \Cref{thm:Betti1}.
Otherwise, $\Delta_{(A, B)} = \overline{\Delta}_{[A,B]}$ is Cohen-Macaulay,
so by \Cref{lemma:intervalCM}(1), the summands $\dim_{\mathbbm k} \widetilde H_i(\overline{\Delta}_{[A,B]};\mathbbm k)$ in the formula of \Cref{prop:hall} are all zero except when $i = \dim \overline{\Delta}_{[A,B]} = \operatorname{rank}([A,B]) - 2$, so the result follows from \Cref{thm:Betti1}.
\end{proof}

We conclude this section by discussing two distinguished families of interval Cohen-Macaulay intersection-closed posets: (i) the lattice of flats of a matroid, and (ii) the face poset of a polyhedral complex.  In both cases, the property of being interval Cohen-Macaulay is established by shellability.

\begin{defn}
A simplicial complex on $[n]$ is \emph{shellable} if there is an ordering of its facets $F_1, \ldots, F_s$ such that $\big (\bigcup_{i < k} F_i\big) \cap F_k$ is pure of dimension $(\dim F_k - 1)$, for each $k = 2, \ldots, s$.  A poset $\P$ is \emph{shellable} if its order complex $\Delta_\P$ is shellable.
\end{defn}

The next proposition collects all results we need concerning shellability;
for details and proofs, cf. the survey \cite{Wac07} and the references therein including \cite{BM71, Sta77, Bjo80, BW96}.

\begin{prop}\label{prop:shellable}
Let $\Delta$ be a simplicial complex, and $\P$ a finite poset.
\begin{enumerate}
    \item  If $\P$ is shellable, then so is any closed or open subinterval in $\P$.
    \item If $\Delta$ is shellable, then it is Cohen-Macaulay.
    \item A locally semimodular poset, in particular a geometric lattice (i.e.\ the lattice of flats of a matroid; see \S\ref{subsection:matroids}), is shellable.
    \item The face poset of a polytope (or its boundary) is shellable.
\end{enumerate}
\end{prop}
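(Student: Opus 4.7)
The plan for each of the four parts is to invoke classical techniques from the theory of shellable complexes and posets, rather than reproducing complete proofs; since the statement is really a convenient catalog of known results, the work is organizational.

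For (1), I would employ the framework of EL-labelings (or more generally CL-labelings) of Bj\"orner and Wachs. If $\Delta_\P$ is shellable, one may arrange a CL-labeling of the bounded extension $\hat 0 * \P * \hat 1$ that realizes the shelling; the restriction of this labeling to a closed subinterval $[A,B]\subseteq \P$ is again a CL-labeling, whence $\Delta_{[A,B]}$ is shellable. Passing from $[A,B]$ to its open counterpart $(A,B)$ uses that $\Delta_{[A,B]}$ is a double cone over $\Delta_{(A,B)}$, and shellability of a cone is equivalent to shellability of its base (any shelling order lifts or descends through the cone apex).

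For (2), the approach is Stanley's argument via the $h$-vector. A shelling $F_1,\ldots, F_s$ yields a partition of the faces of $\Delta$ into intervals $[R(F_k), F_k]$ in the face poset, where $R(F_k)$ is the unique minimal face added at step $k$. One checks Reisner's criterion (\Cref{lemma:intervalCM}(1)) by induction on $s$: each link $\operatorname{lk}_\Delta F$ inherits a shelling from the given one (by restricting to those facets containing $F$), so the vanishing of reduced homology in sub-top dimensions propagates to all links. For (3), I would construct Bj\"orner's EL-labeling of the lattice of flats: fix a linear order on the matroid ground set and label each cover $F \lessdot G$ by $\min(G\setminus F)$. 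The matroid exchange axiom implies that every interval admits a unique increasing maximal chain which is moreover lexicographically minimal — precisely the EL condition — and shellability follows from the theorem that bounded posets with EL-labelings are shellable. The same recipe adapts to locally semimodular posets.

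For (4), I would invoke the Bruggesser--Mani line shelling: for a polytope $P\subseteq \RR^d$, pick a generic line $\ell$ through an interior point and order the facets of $\partial P$ by the time each becomes visible from a point moving outward along $\ell$ and re-entering from infinity on the other side; convexity and genericity guarantee the shelling axiom holds. The face poset of $P$ itself differs from that of $\partial P$ only by an added maximum element, and shellability is inherited by adjoining this as the last facet of a cone. The genuine technical obstacle among the four would be (2): verifying Reisner's criterion from a shelling requires a nontrivial induction on the reduced chain complexes of links and careful bookkeeping of restriction faces. Since all four statements are thoroughly established in the literature, the cleanest exposition is to cite the survey \cite{Wac07} and the original sources.
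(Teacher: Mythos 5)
The paper offers no proof of its own here: it simply points the reader to the survey \cite{Wac07} and the references therein, which is exactly where your exposition lands in its closing sentence, so at the level of strategy you and the paper agree.

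That said, your sketch of part (1) rests on a claim that is not available. You assert that shellability of $\Delta_\P$ lets you ``arrange a CL-labeling of the bounded extension $\hat 0 * \P * \hat 1$ that realizes the shelling.'' CL-shellability is known to imply shellability, but the converse --- that every shellable poset admits a CL-labeling --- is open, so you cannot start from an arbitrary shelling and produce a CL-labeling from it. The standard argument for (1) does not pass through lexicographic labelings at all: $\Delta_{(A,B)}$ is a link in $\Delta_\P$ (precisely as recorded in the proof of \Cref{lemma:intervalCM}(3), taking the link of a chain through $A$ and $B$ that omits everything strictly between them), and links of shellable complexes are again shellable; the closed interval then follows from your cone observation, which is sound. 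The sketches for (2), (3), and (4) invoke the right classical ingredients (Reisner's criterion propagated through links, Bj\"orner's $\min(G \setminus F)$ EL-labeling of geometric lattices and its extension to locally semimodular posets, and Bruggesser--Mani line shellings), though in (2) the opening appeal to the $h$-vector and the partition into intervals $[R(F_k), F_k]$ is a detour --- the Cohen--Macaulay conclusion comes directly from the link-inheritance argument you describe in the sentence that follows, not from the $h$-vector bookkeeping.
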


We remark that while we only give detailed expositions for matroids and polyhedral cell complexes, there are other examples of interesting families of posets whose structure fits into the framework of this section (e.g. antimatroids, whose posets are upper-semimodular).

\subsection{Matroids}\label{subsection:matroids} 
\

\medskip
We give a brief overview of matroids  ---  details for unproven claims may be found in \cite{Oxl11}.

\begin{defn}
A \emph{matroid} $M = (E,\operatorname{rk}_M)$ consists of a finite set $E$, called the \emph{ground set}, and a \emph{rank function} $\operatorname{rk}_M: 2^E \to \ZZ_{\geq 0}$ such that
\begin{enumerate}
    \item if $A\subseteq E$ then $\operatorname{rk}_M(A) \leq |A|$,
    \item if $A\subseteq B\subseteq E$ then $\operatorname{rk}_M(A) \leq \operatorname{rk}_M(B)$, and
    \item if $A,B \subseteq E$ then $\operatorname{rk}_M(A\cup B) + \operatorname{rk}_M(A\cap B) \leq \operatorname{rk}_M(A) + \operatorname{rk}_M(B)$.
\end{enumerate}
\end{defn}

A subset $I\subseteq E$ is \emph{independent} if $\operatorname{rk}_M(I) = |I|$.
The maximal independent sets are called the \emph{bases} of $M$, and all have the same cardinality $\rank(M) := \operatorname{rk}_M(E)$, called the \emph{rank} of $M$.
The \emph{closure} of a subset $A \subseteq E$ is the set $\operatorname{cl}_M(A) := \{ x \in E \mid \operatorname{rk}_M(A) = \operatorname{rk}_M(A \cup \{x\}) \}$. 
A \emph{flat} of $M$ is a closed subset, i.e. a subset $F\subseteq E$ such that $F = \operatorname{cl}_M(F)$.  
The flats of a matroid, under inclusion, form an intersection-closed lattice $\mathcal P_M$ with $\rank(\P_M) = \rank(M)$.  
The lattice $\P_M$ is a bounded poset: there is a unique minimal flat of rank $0$, whose elements are called $\emph{loops}$, and $E$ is the unique maximal flat.
The \emph{M\"obius number} $\mu(M)$ of a matroid $M$ is the number $\mu_{\P_M}(\hat 0, \hat 1)$, where $\hat 0, \hat 1$ are respectively the bottom, top elements of $\P_M$.  
The M\"obius numbers of $\P_M$ are well-studied quantities of interest in combinatorics; see \cite{Rot64} or \cite{Zas87} for a survey.

\medskip
There are two standard ways to construct a new matroid from $M = (E, \operatorname{rk}_M)$ given a subset $T\subseteq E$: the \emph{restriction} of $M$ to $T$ is the matroid $M|T = (T, \operatorname{rk}_{M|T})$ where $\operatorname{rk}_{M|T}(A) := \operatorname{rk}_M(A)$ for $A \subseteq T$, and the \emph{contraction} of $M$ by $T$ is the matroid $M/T := (E \setminus T, \operatorname{rk}_{M/T})$ where $\operatorname{rk}_{M/T}(A) := \operatorname{rk}_M(A\cup T) - \operatorname{rk}_M(T)$ for $A \subseteq E \setminus T$.  A \emph{matroid minor} of $M$ is a matroid that arises as $M|B/A$ for some $A\subseteq B \subseteq E$.  When $F\subseteq G$ are flats of $M$, then the lattice of flats of $M|G/F$ is isomorphic to the interval $[F,G]$ in the lattice of flats of $M$.

\begin{eg} The prototypical example of a matroid is a set of vectors $E = \{v_0, \ldots, v_{n-1}\}$ in a vector space $V$, with rank function given by $\operatorname{rk}_M(A) := \dim \operatorname{span}(A)$ for $A \subseteq E$.  
In this case, the independent sets are exactly the subsets of $E$ which are linearly independent in $V$, and the flats are exactly $W \cap E$ for some vector subspace $W\subseteq V$, i.e. correspond to subspaces of $V$. Matroids arising in this way are called \emph{representable}.

When $V$ is a vector space over the finite field $\mathbb F_2$, the function class of the lattice of flats is the set of conjunctions of parity functions (i.e.\ conjunctions of
linear functionals over $\mathbb F_2$).  We discuss this case further in \S\ref{subsection:comp2}.
\end{eg}

Let $\P_M$ be the lattice of flats of a matroid $M$; lattices arising in this way are also known as \emph{geometric lattices}. It follows from \Cref{prop:shellable}(3) that $\P_M$ is shellable and hence Cohen-Macaulay.  Thus for matroids, \Cref{thm:Betti2} specializes to:

\begin{cor}\label{cor:bettimatroid}
Let $M$ be a matroid. Then for $F\leq G \in \P_M$ flats of $M$,
$$\beta_{i,m(F,G)}(I^\star_{\C(\P_M)}) = 
\begin{cases} |\mu(M|G/F)| & \textnormal{if }\rank(M|G/F) = i\\
0 & \textnormal{otherwise}.
\end{cases}
$$
In other words, the multigraded Betti numbers of $I^\star_{\C(\P_M)}$ are the M\"obius numbers of the loopless matroid minors of $M$.
\end{cor}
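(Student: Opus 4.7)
The plan is to recognize this as a direct specialization of \Cref{thm:Betti2} to the poset $\P = \P_M$, so the work reduces to (i) checking the hypotheses of \Cref{thm:Betti2} for $\P_M$, and (ii) translating the intrinsic poset data $\mu_{\P_M}(F,G)$ and $\rank([F,G])$ into matroid language via the minor $M|G/F$.

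First I would verify that $\P_M$ is intersection-closed and that it is interval Cohen-Macaulay. Intersection-closedness is immediate from the matroid axioms: the intersection of two flats is a flat, since $\operatorname{cl}_M$ is a closure operator and each flat equals its own closure. For interval Cohen-Macaulayness, I invoke \Cref{prop:shellable}(3), which gives that the geometric lattice $\P_M$ is shellable. Then \Cref{prop:shellable}(1) passes shellability to every closed (hence every open) subinterval of $\P_M$, and \Cref{prop:shellable}(2) upgrades shellability to Cohen-Macaulayness. Thus every open interval of $\P_M$ is Cohen-Macaulay, i.e.\ $\P_M$ is interval Cohen-Macaulay, and \Cref{thm:Betti2} applies to give
\[
\beta_{i,m(F,G)}(I^\star_{\C(\P_M)}) = \begin{cases} |\mu_{\P_M}(F,G)| & \textnormal{if } \rank([F,G]) = i \\ 0 & \textnormal{otherwise.} \end{cases}
\]

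It then remains to identify $\mu_{\P_M}(F,G) = \mu(M|G/F)$ and $\rank([F,G]) = \rank(M|G/F)$. Both of these follow from the standard fact, recalled in the excerpt, that the lattice of flats of $M|G/F$ is isomorphic to the interval $[F,G] \subseteq \P_M$. Indeed, the rank function of $M|G/F$ on the ground set $G \setminus F$ is $\operatorname{rk}_{M|G/F}(A) = \operatorname{rk}_M(A \cup F) - \operatorname{rk}_M(F)$, and flats of $M|G/F$ correspond bijectively and order-preservingly to flats of $M$ lying between $F$ and $G$; consequently the bottom and top of $[F,G]$ correspond to the bottom and top of the lattice of flats of $M|G/F$. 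Since the M\"obius function and the rank of a bounded poset depend only on its order-theoretic structure, the two identifications follow, and substituting them yields the desired formula.

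No step here is a genuine obstacle, as the corollary is essentially an unpacking of definitions once \Cref{thm:Betti2} is in hand; the only point that requires a moment of care is that interval Cohen-Macaulayness of $\P_M$ must be deduced from shellability of $\P_M$ via the closure of shellability under taking intervals (\Cref{prop:shellable}(1)), rather than being asserted directly. The final sentence of the corollary, interpreting these as M\"obius numbers of loopless minors, is justified by noting that $M|G/F$ is loopless exactly when $F$ is a flat of $M$ (which is our standing assumption), since loops of $M|G/F$ would correspond to elements of $G \setminus F$ whose addition to $F$ does not increase the rank — but such elements belong to $\operatorname{cl}_M(F) = F$, a contradiction.
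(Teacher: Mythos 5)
Your proposal is correct and essentially reproduces the paper's (tacit) argument: specialize \Cref{thm:Betti2} to $\P_M$ after verifying interval Cohen-Macaulayness via shellability of the geometric lattice, then translate poset data to matroid data using the isomorphism $[F,G]\cong \P_{M|G/F}$. The one small deviation is that you pass shellability to subintervals directly via \Cref{prop:shellable}(1) and then apply \Cref{prop:shellable}(2), whereas the paper deduces that $\P_M$ is Cohen-Macaulay and then invokes \Cref{lemma:intervalCM}(3) to get interval Cohen-Macaulayness; both routes are valid and equally short, and your added remark on why $M|G/F$ is loopless (because $F$ is a flat) is a welcome clarification of the corollary's final sentence.
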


It is easy to check that if $M = (E, \operatorname{rk}_M)$ is a matroid, then for any subset $A \subseteq E$, the closure $\operatorname{cl}_M(A)$ of $A$ is equal to the intersection of all flats containing $A$. 
In particular, when $\P = \P_M$ is the lattice of flats of a matroid, the closure defined in \Cref{defn:intclosed} agrees with the closure operation in the matroid.
Now, if $B\subseteq E$ is a basis of $M$ and $I\subseteq B$, then the closure $\overline I$ of $I$ in $\P_M$ is disjoint from $B\setminus I$, and so it follows from \Cref{lem:posetshatter} that any basis of $M$ is shattered by $\C(\mathcal P_M)$.
Hence for any basis $B$, we have $\vcdim \C(\P_M) \geq |B| = \rank(M)$, and combining this with \Cref{cor:pdim<rank} and \Cref{thm:vcdim<hdim} yields:

\begin{cor}\label{cor:trinity1}
Let $\P_M$ be the lattice of flats of a matroid $M$, and $\C(\P_M)$ the associated function class.  Then
$$\vcdim \C(\P_M) = \hdim \C(\P_M) = \operatorname{rank}(M).$$
\end{cor}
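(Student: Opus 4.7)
The plan is to sandwich both quantities between $\rank(M)$ from above and from below, using the three inequalities already assembled in the excerpt. Since $\P_M$ is intersection-closed, \Cref{cor:pdim<rank} gives $\hdim \C(\P_M)\le \rank(\P_M)$, and by a standard fact about geometric lattices recorded in the matroid overview, $\rank(\P_M) = \rank(M)$. Combined with \Cref{thm:vcdim<hdim}, this yields
\[
\vcdim \C(\P_M) \;\le\; \hdim \C(\P_M) \;\le\; \rank(\P_M) \;=\; \rank(M),
\]
so the entire corollary reduces to establishing the matching lower bound $\vcdim \C(\P_M) \geq \rank(M)$.

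For the lower bound, I would exhibit a basis $B$ of $M$ as a shattered subset and apply \Cref{lem:posetshatter}. Fix a basis $B$ of $M$ with $|B| = \rank(M)$, and let $I \subseteq B$ be arbitrary. The closure operation on $\P_M$ from \Cref{defn:intclosed} coincides with the matroid closure $\operatorname{cl}_M$, since flats are intersection-closed and every set is contained in its matroid closure, which is itself a flat. Thus $\overline{I} = \operatorname{cl}_M(I)$. Because $B$ is independent, no element of $B \setminus I$ lies in $\operatorname{cl}_M(I)$ (else $\operatorname{rk}_M(I \cup \{x\}) = \operatorname{rk}_M(I) = |I|$ would violate independence of $B$). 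Hence $\overline{I} \cap (B \setminus I) = \emptyset$, so by \Cref{lem:posetshatter}, $B$ is shattered by $\C(\P_M)$, giving $\vcdim \C(\P_M) \ge |B| = \rank(M)$.

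Stringing together $\rank(M) \le \vcdim \C(\P_M) \le \hdim \C(\P_M) \le \rank(M)$ collapses all three quantities to $\rank(M)$, completing the proof.

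There is no real obstacle here beyond assembling the pieces: the agreement of the poset-theoretic closure with the matroid closure must be checked carefully (this is the only place a small verification is needed), and the observation that elements of $B \setminus I$ are independent of $\operatorname{cl}_M(I)$ is essentially the definition of independence. The main conceptual work — building the cellular resolution and bounding $\hdim$ by poset rank — has already been done in the earlier sections, so this corollary is a clean synthesis.
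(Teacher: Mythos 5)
Your proposal is correct and follows essentially the same route as the paper: sandwich $\vcdim$ and $\hdim$ between $\rank(M)$ using \Cref{cor:pdim<rank} and \Cref{thm:vcdim<hdim} for the upper bound, and shatter a basis via \Cref{lem:posetshatter} (after identifying the poset closure with the matroid closure) for the lower bound. The only difference is that you spell out explicitly why $\operatorname{cl}_M(I)\cap(B\setminus I)=\emptyset$ for a basis $B$, which the paper simply asserts.
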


\begin{rem}\label{rem:noCM}
A major family of function classes with the property $\vcdim \C = \hdim \C$ given in \cite{Yan17} is the case where $I_\C$ is Cohen-Macaulay.
This is true, for example, for downward-closed classes, i.e. classes $\C$ such that $g \in \C$ if $g^{-1}(1) \subseteq f^{-1}(1)$ and $f \in \C$ \cite[Section 3.2]{Yan17}.
We remark that while $\vcdim \C(\P_M) = \hdim \C(\P_M)$ for a matroid $M$, the suboplex ideal $I_{\C(\P_M)}$ is almost never Cohen-Macaulay.  Recall the Eagon-Reiner criterion \cite{ER98} that $I_\C$ is Cohen-Macaulay if and only if $I^\star_\C$ has a linear resolution.  As $\deg m(F,G) = n + |F\setminus G|$, it follows from \Cref{thm:Betti2} that $I^\star_\C$ has a linear resolution if and only if $|F\setminus G| = \rank([F,G])$ for all $F\leq G \in \P$.  Only the matroids that are Boolean after removing loops --- that is, matroids $M = (E,\operatorname{rk}_M)$ such that $\operatorname{rank}(M) = |E| - \#\textnormal{(loops)}$ --- satisfy this condition.
\end{rem}

We provide a matroidal example illustrating the theorems above.

\begin{eg}
Let $n = 4$, and let $\C$ be the function class on $[4]$ consisting of the $10$ functions
$$\{0000, 1000, 0100, 0010, 0001,1100, 1010, 1001, 0111, 1111\}.$$
Here each binary string represents a function's values on $[4]$.  Under the correspondence between subsets of $[4]$ and their indicator functions, we have $\C = \C(\mathcal P)$, where $\P = \P_M$ is the lattice of the flats of the matroid $M = U_{1,\{0\}} \oplus U_{2, \{1,2,3\}}
\cong U_{1,1}\oplus U_{2,3}$. 
Here $U_{k,m}$ is the uniform matroid of rank $k$ on $m$ elements, whose bases are all $k$-element subsets of $[m]$.
We give three different representations of $M$: a graph whose cyclic matroid is $M$, the matrix over $\mathbb F_2$ whose columns represent $M$, and the lattice of flats of $M$ are displayed in \Cref{fig:2} below.
\begin{figure}
    \centering
\begin{tabular}{p{6.3cm} p{5cm} p{5cm}}
\vspace{15pt} \hspace{0.4cm}
\begin{tikzpicture}[scale=0.7]
\draw[fill]
(0,1) circle [radius=0.1] 
(0,-1) circle [radius=0.1] 
(1.73, 0) circle [radius=0.1] 
(3.5, 0) circle [radius=0.1] 
;
\draw
(0,1) -- (0,-1) node [left] [midway] {$1$}
(1.73, 0) -- (0,1) node [above] [midway] {$3$}
(0,-1) -- (1.73, 0) node [below] [midway] {$2$}
(1.73,0) -- (3.5, 0) node [midway] [above] {0}
;
\end{tikzpicture}

&
\vspace{20pt}
$\begin{bmatrix}
1 & 0 & 0 & 0 \\
0 & 1 & 1 & 0 \\
0 & 1 & 0 & 1 \\
\end{bmatrix}
$

&
\vspace{-5pt}
\begin{tikzpicture}[scale=0.7, vertices/.style={draw, fill=black, circle, inner sep=0pt}]
    \node [vertices, label=right:{${\emptyset}$}] (0) at (-0+0,0){};
    \node [vertices, label=right:{${3}$}] (1) at (-2.25+0,1.33333){};
    \node [vertices, label=right:{${2}$}] (2) at (-2.25+1.5,1.33333){};
    \node [vertices, label=right:{${1}$}] (3) at (-2.25+3,1.33333){};
    \node [vertices, label=right:{${0}$}] (4) at (-2.25+4.5,1.33333){};
    \node [vertices, label=right:{${123}$}] (8) at (-2.25+0,2.66667){};
    \node [vertices, label=right:{${03}$}] (5) at (-2.25+1.5,2.66667){};
    \node [vertices, label=right:{${02}$}] (6) at (-2.25+3,2.66667){};
    \node [vertices, label=right:{${01}$}] (7) at (-2.25+4.5,2.66667){};
    \node [vertices, label=right:{${0123}$}] (9) at (-0+0,4){};
    \foreach \to/\from in {0/1, 0/2, 0/3, 0/4, 1/8, 1/5, 2/8, 2/6, 3/8, 3/7, 4/5, 4/6, 4/7, 5/9, 6/9, 7/9, 8/9}
    \draw [-] (\to)--(\from);
\end{tikzpicture}
\end{tabular}
\vspace{-40pt}
\caption{The matroid $U_{1,1} \oplus U_{2,3}$}
\label{fig:2}
\end{figure}

\noindent
The $\mathbb{Z}$-graded Betti table of $I^\star_\C$ is, in standard Macaulay2 \cite{M2} format,

$$\begin{matrix}
     &0&1&2&3\\\text{total:}&10&17&10&2\\\text{4:}&10&11&3&\text{.}\\\text{5:}&\text{.}&6&7&2
\end{matrix}
$$  
and in accordance with \Cref{thm:Betti2}:
\begin{itemize}
\item There are 10 flats in $\P_M$, corresponding to the $\beta_{0,4} = 10$ minimal monomial generators of $I^\star_\C$, all of degree $4$.
\item There are 17 intervals of rank 1 (i.e.\ cover relations) in $\mathcal P$, corresponding to the first total Betti number $\beta_1 = 17$. There are $\beta_{1,5} = 11$ cover relations $A < B$ with $|B \setminus A| = 1$ (so that $\deg m(A, B) = n + |B \setminus A| = 5$), and $\beta_{1,6} = 6$ cover relations with $|B \setminus A| = 2$.
\item There are 8 intervals of rank 2 in $\mathcal P$, all of which are Boolean except two: the (isomorphic) intervals $[\emptyset,123]$ and $[0,0123]$.  
The M\"obius number of a Boolean lattice is 1, and the M\"obius number of $[\emptyset,123]$ is 2, hence $\beta_2 = 6(1) + 2(2) = 10$.
As with $\beta_1$, the graded Betti numbers can be obtained as follows:  $\beta_{2,6} = 3$ is the sum of the M\"obius numbers of rank two intervals $[A, B]$ with $|B \setminus A| = 2$, and $\beta_{2,7} = 7$ is the corresponding sum with $|B \setminus A| = 3$.
\item The top Betti number is $\beta_3 = 2$, as can be verified by computing the M\"obius number of $M$. As the M\"obius number of $M$ is also the reduced Euler characteristic of the truncated order complex of $\P_M$ (cf. \Cref{defn:trunc} and \Cref{prop:hall}), we can also verify that $\beta_3 = 2$ by noting that this complex, drawn in \Cref{fig:3} below, is connected and has two-dimensional first (reduced, singular) homology.
\end{itemize}

\begin{figure}[h]
\begin{tikzpicture}[scale = 0.8]
\draw[fill]
(0,0) circle [radius = 0.07] node [left] {123}
(1,1) circle [radius = 0.07] node [left] {1}
(1,0) circle [radius = 0.07] node [above] {2}
(1,-1) circle [radius = 0.07] node [left] {3}
(2,1) circle [radius = 0.07] node [right] {01}
(2,0) circle [radius = 0.07] node [above] {02}
(2,-1) circle [radius = 0.07] node [right] {03}
(3,0) circle [radius = 0.07] node [right] {0}
;
\draw
(0,0) -- (1,1) 
(0,0) -- (1,0)
(0,0) -- (1,-1)
(1,1) -- (2,1)
(1,0) -- (2,0)
(1,-1) -- (2,-1)
(3,0) --(2,1)
(3,0) -- (2,0)
(3,0) -- (2,-1)
;
\end{tikzpicture}
\caption{The truncated order complex $\overline{\Delta}_{\P_M}$}
\label{fig:3}
\end{figure}

\noindent As per \Cref{cor:trinity1}, the homological dimension of $\C$ is 3, which is also the rank of the matroid $M$, as well as the VC dimension of $\C$; indeed, $\{0,1,2\}$ is shattered by $\C$, while the whole set $\{0,1,2,3\}$ is not.
\end{eg}

\subsection{Polyhedral cell complexes}
\label{subsection:facePosetCellComplex}

\begin{defn}\label{defn:polyhedralCellComplex}
A \emph{(polyhedral) cell complex} $X$ is a finite collection of convex polytopes (all living in a real vector space $\RR^n$), called faces of $X$, satisfying two properties:
\begin{itemize}
    \item If $F$ is a polytope in $X$ and $G$ is a face of $F$, then $G$ is in $X$.
    \item If $F$ and $G$ are in $X$, then $F \cap G$ is a face of both $F$ and $G$.
\end{itemize}
The \emph{vertex set} $\VertSet(X)$ of $X$ is the set of 0-dimensional faces of $X$, and the \emph{facets} of $X$ are the faces which are maximal with respect to inclusion.
\end{defn}

\begin{defn}
The \emph{face poset} $\P_X$ of a cell complex $X$ is the subposet of $2^{\VertSet(X)}$ where each element of $\P_X$ consists of the set of vertices of some face $F$ of $X$.
\end{defn}

If $X$ is a cell complex, then the second property of \Cref{defn:polyhedralCellComplex} ensures that $\P_X$ is a meet-semilattice, with meet given by set intersection.
Note that if the facets of $X$ are simplices, then $\P_X$ is downward-closed, i.e.\ is a simplicial complex.

The face poset of a cell complex is interval Cohen-Macaulay, as follows from combining \Cref{prop:shellable} with the following lemma:

 \begin{lem}\cite[Thm 2.17(ii)]{ziegler_lectures_1995}\label{lem:faceposetInterval}
If $F \sbe G$ are two faces of a polytope $X$, then the interval $[F, G] \subseteq \P_X$ is the face poset of another polytope of dimension $\dim G - \dim F + 1$.
\end{lem}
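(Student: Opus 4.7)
The plan is to realize the interval $[F, G]$ as the face poset of the so-called \emph{quotient polytope} $G/F$. Since $G$ is itself a polytope with face poset $[\emptyset, G] \subseteq \P_X$, I would first reduce to the case where $G$ is the ambient polytope by working inside the affine hull $\operatorname{aff}(G)$ and regarding $G$ as the full polytope.

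Next, fix a point $p$ in the relative interior of $F$ and choose an affine subspace $L \subseteq \operatorname{aff}(G)$ of dimension $\dim G - \dim F$ passing through $p$ and transverse to $\operatorname{aff}(F)$, i.e.\ with $L \cap \operatorname{aff}(F) = \{p\}$. For $\epsilon > 0$ sufficiently small, form
\[
P := G \cap L \cap \overline{B},
\]
where $\overline{B}$ denotes the closed ball of radius $\epsilon$ about $p$ inside $L$. The assignment $K \mapsto K \cap L \cap \overline{B}$ should give an order-preserving bijection from the interval $[F, G] \subseteq \P_X$ onto the face poset of $P$.

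The main technical step is verifying this bijection. For $\epsilon$ small, one wants: (i) any face $K$ of $G$ containing $F$ meets $L$ transversally near $p$ in a face of $P$ of dimension $\dim K - \dim F$; (ii) strict inclusions of such $K$'s correspond to strict inclusions of their intersections with $L$; and (iii) the sphere $\partial B$ introduces no spurious faces, because for $\epsilon$ smaller than the reach of the normal cone to $F$ at $p$, the boundary only meets faces of $G$ that already contain $F$.

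The main obstacle is controlling these boundary artifacts introduced by the cutoff ball. A cleaner alternative is to pass to the polar dual, under which the interval of faces above $F$ becomes the interval of faces below the dual face $F^\Delta$ in the polar polytope $G^\Delta$, and the quotient construction becomes the familiar \emph{vertex figure} of $F^\Delta$ in $G^\Delta$, avoiding the ball cutoff entirely. Either approach gives the dimension asserted in the lemma, and the full argument is carried out in Theorem 2.17(ii) of Ziegler, which is the reference cited in the statement.
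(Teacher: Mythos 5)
Your overall plan -- reduce to $G$ being the ambient polytope, then exhibit a polytope whose face lattice is $[F,G]$ either by a primal ``local cross-section'' construction or by polarity -- is the right idea, and the dual route is essentially how Ziegler proves this. But there is a genuine gap in the primal construction. The set $P = G \cap L \cap \overline{B}$ is not a polytope: the round sphere $\partial B$ contributes a strictly convex piece of boundary, so $P$ has infinitely many extreme points and no sensible finite face poset. Even if you replace $\overline{B}$ by a polyhedral neighborhood, the faces coming from $\partial \overline{B}$ appear in the face poset of $P$ with no counterpart in $[F,G]$, so the claimed bijection $K \mapsto K \cap L \cap \overline{B}$ cannot be onto. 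What you actually want is a single affine hyperplane $H'$ inside $L$ that strictly separates $p$ from the remaining vertices of the polytope $G \cap L$; then $Q := G \cap L \cap H'$ is the vertex figure of $p$ in $G \cap L$, and its face lattice is precisely $[F,G]$. In other words, your primal construction reduces to the vertex-figure case rather than circumventing it, and the ball plays no useful role.

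There is also a dimension error that you should have caught, since you assert ``either approach gives the dimension asserted.'' Your $P$ (or the corrected $Q$) sits inside $L$, which has dimension $\dim G - \dim F$; the corrected $Q$ has dimension $\dim G - \dim F - 1$. Neither equals the $\dim G - \dim F + 1$ claimed in the lemma. In fact the correct value is $\dim G - \dim F - 1$: a maximal chain in $[F,G]$ has $\dim G - \dim F + 1$ elements, while the face poset of a $d'$-polytope (including $\emptyset$ and the polytope itself) has maximal chains of length $d' + 2$, forcing $d' = \dim G - \dim F - 1$. This matches Ziegler; the ``$+1$'' in the statement as printed is a sign error. Finally, a small but real inaccuracy in your dual route: polarity sends the interval $[F,G] \subseteq L(G)$ \emph{anti}-isomorphically onto $[\hat 0, F^{\Diamond}] \subseteq L(G^{\Delta})$, which is simply the face lattice of the face $F^{\Diamond}$ of $G^{\Delta}$ -- no vertex figure is taken in $G^{\Delta}$ (and $F^{\Diamond}$ is generally a face, not a vertex). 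To obtain an isomorphism rather than an anti-isomorphism you polar once more and take $(F^{\Diamond})^{\Delta}$, which again has dimension $\dim G - \dim F - 1$.
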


As the (reduced) Euler characteristic of the boundary of a polytope is $\pm 1$, \Cref{thm:Betti2} can be rephrased in this context as follows.

\begin{cor}\label{cor:betticell}
For $F\leq G \in \mathcal P_X$ faces of a cell complex $X$, we have
$$\beta_{i,m(F,G)}(I^\star_{\C(\mathcal P_X)}) = \begin{cases} 1 & \textnormal{if $i = \dim G - \dim F$}\\ 0 & \textnormal{otherwise}.
\end{cases}
$$
\end{cor}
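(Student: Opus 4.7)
The plan is to invoke \Cref{thm:Betti2} for $\P = \P_X$, so the first order of business is to check its two hypotheses. Intersection-closedness of $\P_X$ is immediate from \Cref{defn:polyhedralCellComplex}: the axiom that $F \cap G$ is a face of both $F$ and $G$ (hence a face of $X$) translates, via the identification of a polytope face with its vertex set, into closure under intersection in $2^{\VertSet(X)}$. For the interval Cohen-Macaulay property, \Cref{lem:faceposetInterval} identifies every closed interval $[F,G] \subseteq \P_X$ with the face poset of some polytope $Q$; by \Cref{prop:shellable}(4) this poset is shellable, and by \Cref{prop:shellable}(2) it is Cohen-Macaulay, which in turn forces the open interval $(F,G)$ to be Cohen-Macaulay via \Cref{lemma:intervalCM}(2).

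Once both hypotheses are in place, \Cref{thm:Betti2} reduces the claim to verifying $\rank([F,G]) = \dim G - \dim F$ and $|\mu_{\P_X}(F,G)| = 1$. The rank identity is standard for polytope face lattices: cover relations correspond to codimension-one inclusions, so any maximal chain of faces from $F$ up to $G$ strictly increases dimension by one at each step. For the M\"obius calculation, I apply Philip Hall (\Cref{prop:hall}) to get $\mu_{\P_X}(F,G) = \widetilde\chi(\overline{\Delta}_{[F,G]})$. Since $[F,G]$ is the face poset of the polytope $Q$, the truncated order complex $\overline{\Delta}_{[F,G]}$ is the order complex of the proper faces of $Q$, which is the barycentric subdivision of $\partial Q$. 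As $\partial Q$ is homeomorphic to a sphere, its reduced Euler characteristic is $\pm 1$, so $|\mu_{\P_X}(F,G)| = 1$.

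The steps above are largely bookkeeping, so there is no deep obstacle; the only genuine care required is for degenerate cases. When $F = G$, the interval $[F,G]$ is a single point and the claim $\beta_{0,m(F,F)} = 1$ reduces to the fact that $m(F,F)$ is a minimal generator (\Cref{thm:Betti1}). When $G$ covers $F$, the polytope $Q$ degenerates so that $\partial Q$ becomes $S^{-1}$, i.e.\ $\overline{\Delta}_{[F,G]} = \{\emptyset\}$ is the empty complex; by convention this has reduced Euler characteristic $-1$, which consistently gives $|\mu_{\P_X}(F,G)| = 1$ and Betti number $\beta_{1,m(F,G)} = 1$, as required.
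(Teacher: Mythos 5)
Your proof is correct and follows the same route the paper takes: the paper introduces \Cref{lem:faceposetInterval} precisely to establish the interval Cohen--Macaulay hypothesis (via \Cref{prop:shellable}), and then invokes \Cref{thm:Betti2} together with the one-line observation that the reduced Euler characteristic of the boundary of a polytope is $\pm 1$. You fill in the same chain of reasoning (intersection-closure from the cell complex axioms, shellability of polytope face posets, Philip Hall to turn the M\"obius number into $\widetilde\chi(\partial Q)$, and the rank bookkeeping $\rank([F,G]) = \dim G - \dim F$), and you handle the degenerate cases $F = G$ and $G \gtrdot F$ explicitly, which the paper leaves implicit. No gaps.
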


Since the rank of $\P_X$ is one more than the dimension of $X$, we get the first inequality of the following corollary.
\begin{cor}\label{cor:trinity2}
Let $\mathcal P_X$ be the face poset of a polyhedral cell complex $X$ of dimension $\dim X$, and $\C(\mathcal P_X)$ the associated function class.  Then
$$\vcdim \C(\P_X) \le \hdim \C(\P_X) = \dim X + 1.$$
Equality holds iff $X$ has a simplex of full dimension ($=\dim X$).
If any polytope in $X$ of maximal dimension has a simplex as a facet or a simple vertex (i.e. a vertex incident on exactly $\dim X$ edges), then
\[\hdim \C(\P_X) - 1 \le \vcdim \C(\P_X) \le \hdim \C(\P_X).\]
This is always the case if $\dim X \le 3$.

\end{cor}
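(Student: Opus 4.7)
The plan is to dispatch the four parts of the corollary in order: first establishing $\hdim \C(\P_X) = \dim X + 1$ together with the inequality $\vcdim \le \hdim$, next the iff criterion for equality, then the weaker bound $\hdim - 1 \le \vcdim$ under the simplex-facet or simple-vertex hypothesis, and finally the reduction of that hypothesis to Euler's formula in dimensions at most three.

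For $\hdim \C(\P_X) = \dim X + 1$, the upper bound is \Cref{cor:pdim<rank} applied to $\rank(\P_X) = \dim X + 1$. For the matching lower bound, I would pick any facet $F \in \P_X$ of maximal dimension and apply \Cref{cor:betticell} to produce the nonzero top Betti number $\beta_{\dim X + 1, m(\hat 0, F)}(I^\star_{\C(\P_X)}) = 1$; equivalently, the equality criterion of \Cref{cor:pdim<rank} holds because $\overline{\Delta}_{[\hat 0, F]}$ is the order complex of the proper face poset of the polytope $F$, hence the barycentric subdivision of $\partial F \cong S^{\dim X - 1}$, whose top reduced homology is nonzero. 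The inequality $\vcdim \le \hdim$ is \Cref{thm:vcdim<hdim}.

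For the equivalence ``equality holds iff $X$ has a simplex of full dimension'': the ``if'' direction is immediate from \Cref{lem:posetshatter}, since for a full-dimensional simplex $\sigma \in X$, every subset of its vertex set $U$ is itself the vertex set of a face of $\sigma$ (and hence lies in $\P_X$), giving $\overline A = A$ for every $A \subseteq U$, so $U$ is shattered. For ``only if'', let $U$ with $|U| = \dim X + 1$ be shattered, and let $f$ denote the face with vertex set $\overline U$. The sets $\{\overline A\}_{A \subseteq U}$ are $2^{|U|}$ distinct elements of $\P_X$ (distinguished by $\overline A \cap U = A$), each contained in $\overline U$, and so correspond to $2^{|U|}$ distinct faces of $f$. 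If $\overline U = U$, then $\overline A \subseteq U$ combined with $\overline A \cap U = A$ forces $\overline A = A$ for every $A \subseteq U$, so every subset of $U$ is the vertex set of a face of $f$ --- precisely the combinatorial characterization of $f$ being a simplex, necessarily of dimension $|U| - 1 = \dim X$. The main obstacle is to rule out $\overline U \supsetneq U$, which I would handle by examining the Boolean sublattice $\{g_A\}_{A \subseteq U}$ inside the face lattice of $f$ (where $g_A$ denotes the face with vertex set $\overline A$) and arguing that an extra vertex $w \in \overline U \setminus U$ would force unwanted coincidences $g_A = g_B$ for distinct $A \neq B$ via the polytopal intersection behavior, contradicting the distinctness established above.

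For the weaker lower bound $\hdim - 1 \le \vcdim$, the simplex-facet case reduces immediately to the ``if'' argument applied to the facet. In the simple-vertex case, let $v$ be a simple vertex of a top-dim face $f$, with $d = \dim X$ neighbors $v_1, \ldots, v_d$ along edges $e_1, \ldots, e_d$, and set $U = \{v_1, \ldots, v_d\}$; since the vertex figure at $v$ is a $(d-1)$-simplex, for each $S \subseteq \{e_1, \ldots, e_d\}$ there is a unique face $g_S$ of $f$ containing $v$ whose edges at $v$ are exactly $S$. For $A \subseteq U$, setting $S_A = \{e_i : v_i \in A\}$, I would verify that the vertex set of $g_{S_A}$ meets $U$ in exactly $A$: the $\supseteq$ direction is immediate, and $\subseteq$ holds since any $v_j \in g_{S_A}$ with $j \notin S_A$ would make the edge $e_j$ --- being a face of $f$ contained in $g_{S_A}$ and therefore a face of $g_{S_A}$ itself --- an edge of $g_{S_A}$ at $v$, contradicting the defining property of $g_{S_A}$. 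Finally, for $\dim X \le 3$: dimensions $0, 1, 2$ have all facets of top-dim polytopes being simplices automatically (points, edges, or edges of polygons); and for dimension $3$, if a 3-polytope had neither a triangular facet nor a simple vertex, then every vertex would have degree $\ge 4$ and every 2-face would have $\ge 4$ edges, giving $2E \ge 4V$ and $2E \ge 4F$, hence $V + F \le E$, contradicting Euler's formula $V - E + F = 2$. The most delicate step overall is the ``only if'' direction of the equivalence, which requires careful polytopal analysis at shattered sets of maximum size.
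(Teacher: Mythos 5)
Your plan tracks the paper's proof closely in every part: both use \Cref{cor:pdim<rank} together with a nonvanishing top Betti number from \Cref{cor:betticell} (equivalently, the sphericity of the barycentric subdivision of $\partial F$) to get $\hdim \C(\P_X) = \dim X + 1$; both derive the ``if'' direction of the equality criterion from \Cref{lem:posetshatter} applied to a full-dimensional simplex; both treat the simplex-facet and simple-vertex cases by exhibiting an explicitly shattered $d$-element set (your vertex-figure argument is a fleshed-out version of the paper's one-liner); and both close with the same Euler's formula computation for $\dim X = 3$.

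The one place where you leave a real gap is the ``only if'' direction of the equality criterion, and here the repair you sketch would not work as stated. An extra vertex $w \in \overline U \setminus U$ does not force any coincidence $g_A = g_B$: the faces $\overline A$ remain pairwise distinct no matter what, because they are already distinguished by $\overline A \cap U = A$. What actually closes the gap is a geometric argument. From the shattering hypothesis, for each $A\subseteq U$ one has $\overline A \cap U = A$, so the $\overline A$ form a chain $\overline\emptyset\subsetneq \overline{\{u_1\}}\subsetneq\cdots\subsetneq\overline U$ of length $d+2$ in $[\hat 0,\overline U]$; since this interval is the face lattice of the polytope $f:=\overline U$ and the chain has maximal length, $\dim f = d$ and $\dim\overline A=|A|-1$ for all $A$ (permute the $u_i$). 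In particular each $\overline{U\setminus\{u\}}$ is a facet of $f$ with supporting hyperplane $H_u$; since $H_u\cap f=\overline{U\setminus\{u\}}$ excludes $u$ while containing $U\setminus\{u\}$, the set $U$ is affinely independent, and $H_u=\operatorname{aff}(U\setminus\{u\})$. The $d+1$ half-spaces $H_u^+$ containing $f$ intersect (inside $\operatorname{aff}(f)$) in exactly $\operatorname{conv}(U)$, so $f\subseteq\operatorname{conv}(U)\subseteq f$, i.e.\ $f$ is the $d$-simplex on $U$ --- and as a by-product $\overline U = U$, so the troublesome case never arises. (To be fair, the paper's own proof is also terse here, asserting without argument that a full-rank Boolean sublattice ``must correspond to the face lattice of a maximal polyhedral cell''; the argument above is what is needed to make that assertion precise.) One smaller point: in your $\overline U = U$ branch you invoke ``the combinatorial characterization of $f$ being a simplex'' but should also record why $\dim f = d$: you have $\dim f\le |U|-1=d$ from the vertex count and $\dim f+1\ge d+1$ because $[\hat 0,f]$ contains a rank-$(d+1)$ Boolean sublattice.
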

\begin{proof}
As mentioned before, \Cref{cor:betticell} (or just \Cref{cor:pdim<rank}) shows the first inequality.

If $X$ has a $(\dim X)$-dimensional simplex, then the set of vertices of this simplex is shattered by the functions corresponding to the faces.
Conversely, $\vcdim \C(\P_X) = \hdim \C(\P_X)$ implies that there is a rank $(\dim X + 1)$ Boolean sublattice.
This sublattice, being maximal, must correspond to the face lattice of a maximal polyhedral cell. 
But any polytope with a Boolean face lattice has to be a simplex, so this yields the claim.

Similarly, if a $(\dim X)$-dimensional polytope in $X$ has a simplicial facet, then the $(\dim X)$ vertices of this facet are shattered, so that
\[
\dim X \le \vcdim \C(P_X).
\]
Likewise, when a $(\dim X)$-dimensional polytope has a simple vertex, then its $(\dim X)$ neighbor vertices are shattered by the faces of this polytope, and the same inequality holds.

Finally, we consider the case when $X$ has dimension at most 3.
Every edge shatters the two points it contains, so $\vcdim \C(\P_X) = \hdim \C(\P_X) = 2$ when $\dim X = 1$, and $2 \le \vcdim \C(\P_X) \le \hdim \C(\P_X) \le 3$ when $\dim X = 2$.

Now suppose $\dim X = 3$ (we may assume $X$ is a polytope).
We show that $X$ has to either have a triangular facet or a vertex with 3 neighbors.
Suppose not, and let $v, e, f$ respectively denote the number of vertices, edges, and 2-faces of $X$.
Since every vertex is incident on at least 4 edges, we have $2e \ge 4v$.
Moreover, since every face has at least 4 edges, we get $2e \ge 4f$.
But by Euler's formula, this means
\[
2 = v - e + f \le \frac 1 2 e - e + \frac 1 2 e = 0,
\]
a contradiction as desired.
\end{proof}

\begin{rem}
In dimension $\ge 4$, it is no longer true that every polytope has either a simplex facet or a simple vertex.
For instance, the 24-cell is a $4$-dimensional polytope in which every facet is an octahedron and each vertex is incident on 6 edges (cf. \cite{coxeter1973regular}).
\end{rem}

\begin{rem}
The assumption of having either a simple vertex or a simplex facet holds generically, in the sense that the convex hull of a set of points in general position is a simplicial polytope (i.e. has all facets being simplices), and the intersection of a collection of half-spaces in general position is a simple polytope (i.e. all of whose vertices are simple) \cite{ziegler_lectures_1995}.
\end{rem}

\section{Applications to computer science}\label{section:comp}

In this section, we apply our new tools to various function classes in computer science.
We first review some terminology: fix $d \in \mathbb{N}$, and set $n := 2^d$.

We consider function classes on $[n]$ consisting of Boolean formulas, as follows:
identify $[n]$ with the set $[2]^d$ of binary strings $(s_0s_1\ldots s_{d-1})$ of length $d$, and let $x_0, \ldots, x_{d-1}$ be Boolean variables, representing Boolean functions $x_i : [2]^d\to [2]$ that send $(s_0\ldots s_{d-1}) \mapsto 1$ if $s_i = 1$ and is 0 otherwise.  
The negation $\neg x_i: [2]^d \to [2]$ sends $(s_0\ldots s_{d-1}) \mapsto 1$ if $s_i = 0$ and is 0 otherwise.  
A \emph{literal} is a variable $x_i$ or its negation $\neg x_i$.
The \emph{conjunction} of a set of literals is their logical AND (denoted with $\wedge$).
The \emph{disjunction} of a set of literals is their logical OR (denoted with $\vee$).

A \emph{Boolean formula} is any expression that can be built up by conjunctions and disjunctions of literals, and naturally represents a function $[2]^d \to [2]$ mapping a binary string to its evaluation under the formula.

\begin{eg}
The conjunction of $\{x_1, \neg x_3, x_7\}$ is a Boolean function $x_1 \wedge \neg x_3 \wedge x_7: [2]^d \to [2]$ such that $(s_0\ldots s_{d-1}) \mapsto 1$ if $s_1 = 1$, $s_3 = 0$, and $s_7 = 1$, and is 0 otherwise.
Similarly, $x_1 \vee \neg x_3 \vee x_7$ is the Boolean function that is 0 iff $s_1=0, s_3=1$, and $s_7=0$. The expression $(x_1 \wedge \neg x_3) \vee (x_7 \wedge \neg x_1)$ is an example of a Boolean formula.
\end{eg}

\subsection{Application of results on polyhedral complexes}

The class of conjunctions arises as the indicator functions of the faces of a cube, where an empty conjunction (the constant function $1$) corresponds to the entire cube, the contradictory conjunction (e.g. $x_1\wedge \neg x_1$) corresponds to the empty set, and a nonrepeating conjunction of length $k$ (e.g. $x_1 \wedge \cdots \wedge x_k$) corresponds to a face of codimension $k$.
\Cref{cor:betticell} then recovers the Betti numbers of the class of conjunctions \cite[Section 2.3.4]{Yan17} by taking $X$ to be the cell complex of a cube $[0,1]^d$.

\subsection{Applications of the rank bound}\label{subsection:comp1}

In the following, we use our rank bound \Cref{cor:pdim<rank} to show that homological dimension of the class of $k$-CNFs is equal to its VC dimension, up to constant multiplicative factors.
We first recall the definition of $k$-CNF.
\begin{defn}
A \emph{$k$-CNF (Conjunctive Normal Form)} is a boolean formula that is a conjunction (AND) of a number of clauses
\[
C_1 \wedge \cdots \wedge C_m\quad
\text{(for example, when $k =2$,}\quad
(x_1 \vee \neg x_3) \wedge (x_2 \vee x_7) \wedge \neg x_2)
\]
where each clause $C_i$ is a disjunction (OR) of at most $k$ literals.
A \emph{monotone $k$-CNF} is a $k$-CNF without any negations appearing.
The class of (monotone) $k$-CNFs in $d$ variables is the class of functions in $[2]^d \to [2]$ consisting of functions corresponding to all (monotone) $k$-CNFs.
\end{defn}

\begin{thm}\label{thm:kCNF}
Let $\C$ be the class of $k$-CNFs and let $\C^+$ be the class of monotone $k$-CNFs in $d$ variables.
Then, with $e$ denoting Euler's constant,
\begin{align*}
    \binom d k \le \vcdim \C^+ &\le \hdim \C^+ \le \sum_{i=0}^k \binom{d} i \le (e d/k)^k\\
    \binom d k \le \vcdim \C &\le \hdim \C \le 2^k \binom{d} k
\end{align*}
so that
\begin{align*}
    \Omega(d^k) &\le \vcdim \C^+ \le \hdim \C^+ \le O(d^k)\\
    \Omega(d^k) &\le \vcdim \C \le \hdim \C \le O(d^k),
\end{align*}
where $\Omega$ and $O$ hide constants dependent on $k$ but independent of $d$.

\end{thm}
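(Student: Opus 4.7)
The plan is to bound $\hdim$ from above using the rank bound \Cref{cor:pdim<rank}, and to bound $\vcdim$ from below by exhibiting an explicit shattered set of size $\binom{d}{k}$. In both cases the relevant poset is $\P = \{f^{-1}(1) \mid f \in \C\} \subseteq 2^{[n]}$, which is intersection-closed because intersecting satisfying sets corresponds to conjoining the underlying (monotone) $k$-CNFs, and the class of (monotone) $k$-CNFs is clearly closed under conjunction.

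The key lemma I would isolate is general: if an intersection-closed poset $\P \subseteq 2^{[n]}$ is generated (under intersection) by $N$ subsets $S_1,\ldots,S_N$, then $\rank(\P) \le N$. The argument is short: for $A \in \P$ set $J(A) := \{i \mid A \subseteq S_i\}$, so that $A = \bigcap_{i \in J(A)} S_i$ by intersection-closedness; any strict chain $A_0 \subsetneq \cdots \subsetneq A_r$ in $\P$ then yields a strict chain $J(A_0) \supsetneq \cdots \supsetneq J(A_r)$ in $2^{[N]}$, forcing $r \le N$. For $\C^+$, the generators of $\P$ are the satisfying sets of the monotone clauses $\bigvee_{i \in T} x_i$ with $1 \le |T| \le k$, of which there are $\sum_{i=1}^k \binom{d}{i}$, giving $\hdim \C^+ \le \sum_{i=0}^k \binom{d}{i}$. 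For $\C$, I would first invoke the padding identity $a = (a \vee x_p) \wedge (a \vee \neg x_p)$, applied to any variable $x_p$ not appearing in $a$, to rewrite every clause of length strictly less than $k$ as a conjunction of length-$k$ clauses; this shows $\P$ is generated by the satisfying sets of size-$k$ clauses, and counting variable-and-sign choices gives $2^k \binom{d}{k}$ such clauses, hence $\hdim \C \le 2^k \binom{d}{k}$.

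For the lower bound (which by \Cref{thm:vcdim<hdim} also feeds back as a lower bound on $\hdim$), I would exhibit the shattered set $V := \{v_T \mid T \subseteq [d],\ |T|=k\}$, where $v_T \in [2]^d$ is defined by $v_T(i) = 0 \iff i \in T$, so $|V| = \binom{d}{k}$. Given any target $\mathcal{T} \subseteq \binom{[d]}{k}$, the monotone $k$-CNF
\[
f_{\mathcal{T}} := \bigwedge_{T \notin \mathcal{T}} \; \bigvee_{i \in T} x_i
\]
has the property that $f_{\mathcal{T}}(v_{T'}) = 0$ iff some clause $\bigvee_{i \in T} x_i$ with $T \notin \mathcal{T}$ is unsatisfied at $v_{T'}$, i.e., $T \subseteq T'$; since $|T| = |T'| = k$ this is equivalent to $T = T'$, so $f_{\mathcal{T}}(v_{T'}) = 1 \iff T' \in \mathcal{T}$. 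Hence $V$ is shattered, giving $\vcdim \C \ge \vcdim \C^+ \ge \binom{d}{k}$. The final asymptotic inequality $\sum_{i=0}^k \binom{d}{i} \le (ed/k)^k$ is classical, following from $(d/k)^k \sum_{i=0}^k \binom{d}{i}(k/d)^i \ge \sum_{i=0}^k \binom{d}{i}$ combined with $(1 + k/d)^d \le e^k$.

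The main obstacle I expect is locating the correct shattered set: once $V$ is written down the verification is a one-line combinatorial check, but the construction is not forced by the rank-bound machinery. The padding identity that sharpens the generator count from $\sum_{i=1}^k 2^i \binom{d}{i}$ down to $2^k \binom{d}{k}$ is the other small idea needed; the remaining steps are routine applications of \Cref{cor:pdim<rank} and \Cref{thm:vcdim<hdim}.
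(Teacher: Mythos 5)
Your proof is correct and follows the same basic strategy as the paper (rank bound via \Cref{cor:pdim<rank} plus the padding identity, and the shattered set of strings with exactly $k$ zeros), but you package the rank estimate more abstractly. The paper's proof works directly with a chain $\boldsymbol 0 < f_1 < \cdots < f_m < \boldsymbol 1$, re-expresses each $f_i$ as a conjunction of the clauses of all $f_{i'}$ with $i' \ge i$, and observes that the resulting clause sets are strictly nested; your version isolates this into a clean reusable lemma---if an intersection-closed poset $\P$ is generated under intersection by $N$ sets $S_1,\ldots,S_N$, then $A \mapsto J(A) := \{i : A \subseteq S_i\}$ is a strictly order-reversing injection into $2^{[N]}$, hence $\rank(\P) \le N$---and then just counts generators. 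This is a genuine (if small) improvement in exposition: it avoids the mildly delicate ``we may assume the clauses are in strict inclusion order'' step, and the lemma applies verbatim to the $\D$-CSP bound in the next theorem as well. You also spell out the shattering explicitly (the paper cites \cite{kearns_introduction_1994}) and supply a proof of $\sum_{i=0}^k\binom{d}{i} \le (ed/k)^k$, both of which the paper leaves implicit. One small accounting remark: for $\C^+$ you count $\sum_{i=1}^k\binom{d}{i}$ nonempty monotone clauses, which actually gives the marginally sharper bound $\hdim \C^+ \le \sum_{i=0}^k\binom{d}{i} - 1$; whether the empty clause is admitted is a matter of convention and does not affect the asymptotics.
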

\begin{proof}
It was established in \cite{kearns_introduction_1994} that $\vcdim \C, \vcdim C^+ \ge \binom d k$ by noticing that the set of inputs
\[
\{x \in [2]^{d} \mid \sum_i x_i = d - k\}
\]
is shattered by $\C^+$ (and thus also by $\C$).
So it suffices to establish the upper bounds.
We start with the class of $k$-CNFs, and then deal with the monotone case.

\noindent\emph{$k$-CNF:}
We start with the upper bound of $\hdim \C$,
\begin{equation}\label{eqn:kCNF}
\hdim \C \le 2^k \binom{d} k.
\end{equation}
We prove this via the rank bound on homological dimension (\Cref{cor:pdim<rank}) and by showing that the rank of $\C$, as a poset in the natural partial order $f \le g \iff f^{-1}(1) \subseteq g^{-1}(1)$, is bounded by the right hand side of \eqref{eqn:kCNF}.

Consider a chain of functions $\boldsymbol 0 < f_1 < \cdots < f_m < \boldsymbol 1$ in $\C$, where $\boldsymbol 0$ (resp. $\boldsymbol{1}$) denotes the constant function 0 (resp. 1).
Each $f_i$ is a conjunction of disjunctive clauses,
\[
f_i = \bigwedge_j C_{ij},\quad \text{each $C_{ij}$ is a disjunction of at most $k$ literals.}
\]
Because $f_i < f_{i+1} < \cdots \le f_m$, we have
\[
f_i = \bigwedge_{i' = i}^m  f_{i'} = \bigwedge_{i' =i}^m \bigwedge_j C_{i'j}.
\]
Therefore we may assume that the clauses of the functions are in (strict) inclusion order
\[
\text{all clauses } \supset \{C_{1j}\}_j \supset \cdots \supset \mathcal \{C_{mj}\}_j \supset \emptyset.
\]
Furthermore, we can assume that the clauses all have exactly $k$ literals, as any size-$k'$ clause, $k' \le k$, can be written as a conjunction of such clauses.
For example,
\begin{align*}
x_1 \vee \cdots \vee x_{k'}
    &=
        x_1 \vee \cdots \vee x_{k'}
        \vee ( x_{k'+1} \wedge \neg x_{k'+1}) \vee \cdots 
        \vee ( x_{d} \wedge \neg x_{d})
        \\
    &=
        \bigwedge_{b \in [2]^{d-k'}}
            x_1 \vee \cdots \vee x_{k'} \vee 
            (\neg)^{b_1} x_{k'+1} \vee \cdots \vee (\neg)^{b_{d-k'}} x_d
\end{align*}
by the distributivity of $\wedge$ and $\vee$.
There are only $2^k \binom d k$ unique clauses with exactly $k$ literals (choose the $k$ variables first, and then decide whether to negate each of them).
Therefore, the chain above can be at most $2^k \binom d k$ long.

By \Cref{cor:pdim<rank}, this proves the desired upper bound on homological dimension.

\noindent\emph{Monotone $k$-CNF:}
The upper bound for $\C^+$ can be proved similarly, except here we cannot express a size-$k'$ clause, $k' \le k$, as a conjunction of size-$k$ monotone disjunctions.
The bound is then established by noting that there are $\sum_{i=0}^k \binom d i$ unique clauses of size $\le k$.
\end{proof}

\begin{rem}
When $k = 1$, the class of (resp.\ monotone) $k$-CNFs is just the class of (resp.\ monotone) conjunctions.
According to \cite[Section 3.1]{Yan17}, the homological dimension of (resp.\ monotone) conjunctions in $d$ Boolean variables is $d+1$ (resp.\ $d$).
At the same time, \Cref{thm:kCNF} only says that the homological dimension is between $d$ and $2 d$ (resp.\ $d$ and $d+1$), so the upper bound of \Cref{thm:kCNF} is not tight in this case.
\end{rem}
\begin{rem}
The logic of \Cref{thm:kCNF} can be applied straightforwardly to bound the homological dimension of \emph{CSP classes,} which we discuss now.
\end{rem}

In general, given a collection of Boolean functions $f_i: [2]^d \to [2]$, their \emph{conjunction} $\bigwedge_i f_i$ is the function that sends $v \in [2]^d$ to 1 iff $f_i(v) = 1$ for all $i$.
Likewise, their \emph{disjunction} $\bigvee_i f_i$ is the function that sends $v \in [2]^d$ to 0 iff $f_i(v) = 0$ for all $i$.
These definitions generalize the notions of conjunction and disjunction introduced earlier for literals.

\begin{defn}
Let $\D$ be a set of Boolean functions on $[2]^d$.
The \emph{class of $\D$-CSPs (Constraint Satisfaction Problems)} is the conjunction closure of $\D$, i.e. it contains all functions of the form $\bigwedge_i f_i$ where each $f_i \in \D$.
\end{defn}
For example, if we let $\D$ be the set of all (resp. monotone) disjunctions of size at most $k$, then $\D$-CSPs are just (resp. monotone) $k$-CNFs.
\begin{thm}
The class $\C$ of $\D$-CSPs satisfies
\[
\vcdim \C \le \hdim \C \le |\D|.
\]
\end{thm}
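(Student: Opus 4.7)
The first inequality $\vcdim \C \le \hdim \C$ is immediate from \Cref{thm:vcdim<hdim}, so the content is the bound $\hdim \C \le |\D|$. My plan is to apply the rank bound \Cref{cor:pdim<rank}, in the same spirit as the proof of \Cref{thm:kCNF}.

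First I would observe that the class $\C$ is intersection-closed: if $f = \bigwedge_{i} g_i$ and $f' = \bigwedge_j g_j'$ with $g_i, g_j' \in \D$, then $f \wedge f' = \bigwedge_i g_i \wedge \bigwedge_j g_j'$ is again a conjunction of elements of $\D$, and $(f \wedge f')^{-1}(1) = f^{-1}(1) \cap (f')^{-1}(1)$. Thus $\C = \C(\P)$ for the associated intersection-closed poset $\P \subseteq 2^{[n]}$ (with $n = 2^d$), so \Cref{cor:pdim<rank} gives $\hdim \C \le \rank(\P)$.

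Next I would bound $\rank(\P) \le |\D|$ by exhibiting a strictly order-reversing injection from $\C$ to $2^\D$. For each $f \in \C$, define
\[
T_f := \{g \in \D \mid f \le g\} \subseteq \D.
\]
Write $f = \bigwedge_{j \in S} g_j$ with $S \subseteq \D$; then $S \subseteq T_f$, and the two conjunctions $\bigwedge_{g \in T_f} g$ and $\bigwedge_{g \in S} g$ must agree, since $\bigwedge_{g \in T_f} g \le \bigwedge_{g \in S} g = f$ (as $S \subseteq T_f$) and $f \le \bigwedge_{g \in T_f} g$ (by definition of $T_f$). Hence $f$ is recovered from $T_f$ as $\bigwedge_{g \in T_f} g$, so $f \mapsto T_f$ is injective. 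Moreover, if $f < f'$ in $\C$, then any $g \in T_{f'}$ satisfies $f \le f' \le g$, so $T_{f'} \subseteq T_f$; and $T_{f'} = T_f$ would force $f' = f$, so $T_{f'} \subsetneq T_f$. Therefore any strict chain $f_0 < f_1 < \cdots < f_m$ in $\P$ pulls back to a strict chain $T_{f_m} \subsetneq \cdots \subsetneq T_{f_0}$ in $2^\D$, whence $m \le |\D|$. This yields $\rank(\P) \le |\D|$ and completes the proof.

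The only subtle point is verifying that $f \mapsto T_f$ is truly strict on chains (rather than just order-reversing), which reduces to the observation that $T_f$ determines $f$ via $f = \bigwedge_{g \in T_f} g$; I expect no further obstacle.
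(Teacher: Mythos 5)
Your proof is correct and takes essentially the same approach as the paper: both reduce to the rank bound of \Cref{cor:pdim<rank} and then bound $\rank(\P)$ by $|\D|$ by mapping a strict chain in $\P$ to a strict chain in $2^\D$. The paper's version of this second step is terse (it refers to ``the same reasoning as in the proof of \Cref{thm:kCNF}'' and builds the chain of subsets by taking unions of clause sets from the tail of the chain); your use of the canonical set $T_f = \{g \in \D \mid f \le g\}$, together with the explicit check that $f = \bigwedge_{g \in T_f} g$ so that $f \mapsto T_f$ is injective and strictly order-reversing, is a cleaner and more self-contained way of getting the same bound.
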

\begin{proof}

We consider the natural semilattice structure of $\D$ induced from the conjunction closure of $\C$.
By the same reasoning as in the proof of \Cref{thm:kCNF}, any chain in this semilattice $\boldsymbol{0} < f_1 < \cdots f_m < \boldsymbol{1}$ must correspond to a chain of reverse inclusions
\[
\D \supset \{g_{1j}\}_k \supset \cdots \supset \{g_{mj}\}_j \supset \emptyset
\]
of collections of functions $g_{ij} \in \D$, in such a way that $f_i = \bigwedge_j g_{ij}$.
This chain can be at most $|\D|$ long since each strict inclusion must differ by some new function in $\D$.
By \Cref{cor:pdim<rank}, this yields the upper bound on homological dimension.
\end{proof}

Note that this last bound is in general far from sharp: it follows from \Cref{cor:parityconj} below that the class of parity functions in $d$ Boolean variables and its conjunction closure have the same VC-dimension $d$, but the size of the class of parity functions is $2^d$.

\subsection{Applications of results on matroids} \label{subsection:comp2}
We conclude with some applications of \Cref{cor:trinity1}.

\begin{cor}
\label{cor:parityconj}
For the class $\C$ of conjunctions of parity functions in $d$ variables,
\[
\vcdim \C = \hdim \C = d.
\]
\end{cor}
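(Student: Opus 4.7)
The plan is to realize $\C$ as the function class $\C(\P_M)$ associated to the lattice of flats of an $\FF_2$-representable matroid $M$ of rank $d$, and then invoke \Cref{cor:trinity1} directly.

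Specifically, I would identify $[n] = [2]^d$ with $\FF_2^d$ and take $M$ to be the matroid on ground set $E = \FF_2^d$ with rank function $\operatorname{rk}_M(A) := \dim_{\FF_2} \operatorname{span}(A)$; this is precisely the $\FF_2$-representable matroid discussed in the example of \S\ref{subsection:matroids}. Since $E$ spans $\FF_2^d$, we have $\rank(M) = d$. The same example identifies $\C(\P_M)$ with the class of conjunctions of parity functions, via the correspondence between a flat (an $\FF_2$-linear subspace $V$) and its indicator function on $\FF_2^d$: writing $V = \{x : \langle a_i, x \rangle = 0 \textnormal{ for all } i\}$, its indicator is the conjunction of the parity conditions ``$\langle a_i, \cdot \rangle = 0$''. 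Hence $\C = \C(\P_M)$.

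Applying \Cref{cor:trinity1} to $M$ then immediately yields
\[
\vcdim \C = \hdim \C = \rank(M) = d,
\]
which is the claim.

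The only substantive step is the identification $\C = \C(\P_M)$, and this is already the content of the example in \S\ref{subsection:matroids} on which \Cref{cor:trinity1} is built, so the proof is essentially a direct appeal. As a self-contained alternative bypassing the matroid framework, one could instead combine the rank bound \Cref{cor:pdim<rank} applied to the intersection-closed poset $\P$ underlying $\C$ (whose elements are affine subspaces of $\FF_2^d$, so $\rank(\P) \le d$) with an explicit shattering of the standard basis $\{e_1, \ldots, e_d\}$: for each $S \subseteq \{e_1, \ldots, e_d\}$, the single parity function $p_{a_S}$ with $a_S \in \FF_2^d$ the indicator vector of $\{i : e_i \in S\}$ restricts to $\mathbbm{1}_S$ on $\{e_1, \ldots, e_d\}$, and then \Cref{thm:vcdim<hdim} sandwiches both invariants at $d$.
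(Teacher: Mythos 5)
Your main argument is correct and follows the paper's proof essentially verbatim: both realize $\C$ as $\C(\P_M)$ for the rank-$d$ representable matroid $M$ whose ground set is all of $\FF_2^d$, identify flats (linear subspaces) with conjunctions of parity constraints, and then invoke \Cref{cor:trinity1}. One caution about your proposed self-contained alternative: if the class $\C$ admits inconsistent conjunctions such as $p_a \wedge p_b \wedge p_{a+b}$ (which it must, for the poset to be intersection-closed, since two nonempty affine subspaces can intersect trivially), then $\emptyset$ lies in $\P$ below every nonempty affine subspace, forcing $\rank(\P) = d+1$; \Cref{cor:pdim<rank} then only gives $\hdim \C \le d+1$ and the sandwich does not close. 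The matroid route sidesteps this because the bottom element of $\P_M$ is the loop flat $\{0\}$ rather than $\emptyset$, so $\rank(\P_M) = d$ exactly.
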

\begin{proof}
Consider the representable rank-$d$ matroid given by all vectors in $\FF_2^d$.
The flats of this matroid are the subspaces of $\FF_2^d$, and the rank function is the vector space dimension over $\FF_2$.

Recall that a parity function $[2]^d \to [2]$ is an $\FF_2$-linear functional by identifying $[2] \cong \FF_2$.
Every parity function is the indicator function of a hyperplane in $\FF_2^d$, and every conjunction of parity functions is the indicator function of a subspace of $\FF_2^d$, which is an intersection of hyperplanes.
Thus the class of conjunctions of parity functions is exactly the function class associated to the matroid above.
\Cref{cor:trinity1} then yields the result.
\end{proof}

By considering suitable (squarefree) symmetric powers -- analogous to taking a Veronese embedding in algebraic geometry -- we can generalize \Cref{cor:parityconj} to higher degree polynomials over $\FF_2$.
\begin{cor}
\label{cor:polyconj}
For the class $\C \sbe [2]^{[2]^d}$ of conjunctions of polynomials over $\FF_2$ of degree $\le k$,
\[
\vcdim \C = \hdim \C = \sum_{i=0}^k \binom d i.
\]
\end{cor}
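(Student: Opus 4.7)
The plan is to generalize the argument of \Cref{cor:parityconj} via a Veronese-style embedding that turns degree-$\le k$ polynomials into \emph{linear} functionals on a larger $\FF_2$-vector space. Set $N := \sum_{i=0}^{k} \binom{d}{i}$, the number of squarefree monomials of degree $\le k$ in $d$ variables, and define $\phi : \FF_2^d \to \FF_2^N$ by $\phi(v) := (v^S)_{S \subseteq [d],\,|S| \le k}$, where $v^S := \prod_{i\in S} v_i$ and $v^\emptyset := 1$. Since the degree-one coordinates of $\phi(v)$ recover $v$, the map $\phi$ is injective.

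Using $x_i^2 = x_i$ on $\FF_2$, every polynomial of degree $\le k$ reduces uniquely to an $\FF_2$-linear combination of these $N$ squarefree monomials, parametrized by its coefficient vector $c_p \in \FF_2^N$, and evaluation satisfies $p(v) = \langle c_p, \phi(v)\rangle$. Introducing $q := p+1$ (still of degree $\le k$), one gets
\[
p^{-1}(1) \;=\; q^{-1}(0) \;=\; \phi^{-1}\big(\{w \in \FF_2^N : \langle c_q, w\rangle = 0\}\big).
\]
So the support of any conjunction $p_1 \wedge \cdots \wedge p_m$ is $\phi^{-1}(W)$ for some $\FF_2$-linear subspace $W \subseteq \FF_2^N$, and conversely every such $\phi^{-1}(W)$ occurs by choosing $q_i$ spanning the annihilator $W^\perp$ and setting $p_i := q_i + 1$. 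Thus $\C$ is precisely the function class associated to the matroid $M$ whose ground set is $\phi(\FF_2^d)$ (identified with $[2^d]$ via $\phi$) and whose rank function is $\FF_2$-linear span dimension inside $\FF_2^N$, exactly mirroring the setup for parities.

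I would then invoke \Cref{cor:trinity1} to conclude $\vcdim \C = \hdim \C = \rank(M)$, and it remains to show $\rank(M) = N$. This amounts to showing $\phi(\FF_2^d)$ spans $\FF_2^N$, dual to the statement that the $N$ squarefree monomials of degree $\le k$ are linearly independent as functions on $\FF_2^d$; the latter holds because they form part of the standard squarefree monomial basis of all functions $\FF_2^d \to \FF_2$. The main subtlety is the affine-to-linear translation $p \mapsto q = p+1$: without it, conjunctions would cut out \emph{affine} subspaces of $\FF_2^N$ (pulled back by $\phi$), which would not fit the lattice of flats of a linear matroid; with it, the correspondence with a geometric lattice is exact, and the matroid machinery of \S\ref{subsection:matroids} applies directly.
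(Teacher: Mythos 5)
Your proof is correct and follows essentially the same route as the paper: the Veronese-style embedding $\phi$ into $\FF_2^{\sum_i \binom{d}{i}}$, the affine-to-linear translation $p \mapsto p+1$ using the $\emptyset$-coordinate, identification of $\C$ with the function class of the resulting linear matroid, and an appeal to \Cref{cor:trinity1}. You spell out a few points the paper leaves implicit (injectivity of $\phi$, both directions of the flat/conjunction correspondence, and why the matroid has full rank via linear independence of squarefree monomials), but the underlying argument is the same.
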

\begin{proof}
Let $D := \sum_{i=0}^k \binom d i$, and let $\FF_2^D$ have coordinates $\{y_U \mid U \subseteq [d], |U| \le k \}$.
Consider the embedding
\[
\phi: \FF_2^d \to \FF_2^D, \;
x=(x_1, \ldots, x_d) \mapsto \left( \prod_{i \in U} x_i \right)_{U \sbe [d], |U| \le k}
\]
sending $(x_i)$ to the vector of all monomials in $x_i$ with degree $\le k$.

The linear matroid given by the image of $\phi$ has rank $D$.
Any flat of this matroid corresponds to the zero set of a system of linear equations in the $y_U$, hence via pullback by $\phi$, to a system of polynomial equations $\{p_i(x)\}_i$ of degree at most $k$.

The indicator function of such a set is the conjunction of the Boolean functions $\{p_i(x)-1\}_i$:
\[
p_i(x) = 0 \quad \forall i \iff
\text{$x$ satisfies }\bigwedge_i (p_i(x)-1).
\]
Thus the class $\C$ under consideration is exactly the class of conjunctions of $\FF_2$-polynomials with degree at most $k$, and the claim follows from \Cref{cor:trinity1}.
\end{proof}

\begin{rem}\label{rem:conjunctionNoHarm}
Since the VC dimension and homological dimension of parity functions (resp. $\FF_2$-polynomials with degree at most $k$) are both $d$ (resp. $\sum_{i=0}^k \binom d i$) as well \cite[Section 3.1]{Yan17}, the above results show that adding the operation of conjunction does not increase the ``complexity'' of these classes, from both a learning-theoretic and a homological point of view.
\end{rem}

\bibliography{Biblio}
\bibliographystyle{alpha}

\end{document}